\begin{document}

\title{A finite element method for Dirichlet boundary control of elliptic partial differential equations}\thanks{This work is supported in part by the Program of 
Chongqing Innovation Team Project in University under Grant CXTDX201601022.}


\titlerunning{A finite element method for Dirichlet boundary control problem}     

\author{Shaohong Du          \and         Zhiqiang Cai}

\authorrunning{S. H. Du, Z. Q. Cai} 

\institute{Shaohong Du\at
               School of Mathematics and Statistics, Chongqing Jiaotong University, Chongqing 400074, China\\
              Tel.: +86-15923295341\\
              Fax: +86-023-62652579\\
              \email{dushaohong@csrc.ac.cn}           
           \and
           Zhiqiang Cai \at
           Department of Mathematics, Purdue University, 150 N. University Street, West Lafayette, IN 47907-2067, USA\\
           Tel.: +1-925-640-5055\\
            Fax: +1-765-494-0548\\
            \email{caiz@purdue.com}
              }
 \date{Received: date / Accepted: date}

\maketitle

\begin{abstract}
This paper introduces a new variational formulation for Dirichlet boundary control problem of elliptic partial differential equations, based on observations that the state and adjoint state are related through 
the control on the boundary of the domain, and that such a relation may be imposed in the variational formulation of the adjoint state. Well-posedness (unique solvability and stability) of the variational problem is
established in the $H^{1}(\Omega)\times H_{0}^{1}(\Omega)$ space for the respective state and adjoint state. A finite element method based on
this formulation is analyzed. It is shown that the conforming $k-$th order finite element approximations to the state and the adjoint state,
in the respective $L^{2}$ and $H^{1}$ norms converge at the rate of order $k-1/2$ on quasi-uniform mesh for conforming element of order $k$. Numerical examples are presented to validate the theory.
\keywords{ Dirichlet boundary control problem \and finite element \and  {\it a priori} error estimates}
\subclass{65K10 \and 65N30 \and 65N21\and 49M25 \and 49K20}
\end{abstract}
\maketitle

\section {Introduction}\label{SECTION1}
Let $\Omega\subset\mathbb{R}^{d},d\geq2$, be a bounded polygonal or polyhedral domain with Lipshitz boundary $\Gamma=\partial\Omega$. Consider
the following Dirichlet boundary control problem of elliptic partial differential equations (PDEs):
\begin{equation}\label{du1}
\min\ J(u),\ \ J(u)=\displaystyle\frac{1}{2}||y-y_{d}||_{L^{2}(\Omega)}^{2}+\frac{\gamma}{2}||u||_{L^{2}(\Gamma)}^{2},
\end{equation}
where the regularization parameter $\gamma>0$ and $y$ is the solution of the Poisson equation with nonhomogeneous Dirichlet boundary conditions
\begin{eqnarray}
    \label{du2}
    -\triangle y=f\ \ {\rm in}\ \Omega, \\
    \label{du3}
    y=u\ \ {\rm on}\ \Gamma.
\end{eqnarray}

 After the pioneering works of Falk \cite{Falk} and Geveci \cite{Geveci}, there were some efforts on the  error estimates for finite element approximation to control problems governed by PDEs. Arada et al. in \cite{Arada,Casas2002} derived error estimates for the control in the $L^{\infty}$ and $L^{2}$ norms for semilinear elliptic control problem. The articles \cite{Gunzburger1996,Fursikov} studied the error estimates of finite element approximation for some important flow control problems. Casas et al. \cite{Casas2005} carried out the study of the Neumann boundary control problem.

However, the works mentioned above are mainly contributions to the distributed control. Since the Dirichlet boundary control plays an important
role in many applications such as flow control problems and has been a hot topic for decades. It is well known that Dirichlet control problems
are difficult theoretically and numerically, because the Dirichlet
boundary data does not directly enter a standard variational setting for the PDEs. On the one hand, the traditional finite element method (see, e.g., \cite{Casa2006,Vexler2007,Deckelnick2009,May2013,Apel2017}) deals with the state variable ($y$) using its weak formulation, e.g., allowing
for solutions $y\in L^{2}(\Omega)$; on the other hand, the attempt of the first order optimality condition involves the normal derivative of the adjoint state ($z$) on the boundary of the domain. Therefore, it is crucial to obtain this normal derivative numerically by using additional equation. But in doing so the problem becomes complicated in both theoretical analysis and numerical practice. Note that the regularity of the solution and error estimates for finite element approximates have been studied in \cite{Apel2015,Mateos2016,Apel2017}.

To avoid the difficulty described above, there are two ways to deal with the control variable. One is in \cite {Of2015} replaced the $L^{2}$ norm in the cost functional with the $H^{1/2}$ norm and attained {\it a priori} estimate of the numerical error of the control by using piecewise linear elements, and the other approximate the nonhomogeneous Dirichlet boundary condition with a Robin boundary condition or weak boundary penalization. However, the former changed the problem and the latter had to deal with the penalization which is computationally expensive. Recently, techniques similar to \cite {Of2015} were used in \cite{Gunzburger1991,Gunzburger1992,Chowdhury2017}.

Recently, Gong et al. considered the mixed finite element method in \cite{Gong2011}, where the optimal control and the adjoint state were involved in a variational form in a natural sense. This makes its theoretical analysis easier, but the corresponding fluxes of the two states $y$ and $z$ are required to be introduced. It points out that the mixed finite method obtained the same rate of convergence as the regularity of the control on boundary. Apel et al. \cite{Apel2017} have considered a standard finite element method on a special class of meshes and guaranteed a superlinear convergence rate for the control. Very recently, Hu et al. \cite{Hu2017} considered a hybridizable discontinuous Galerkin method to obtain optimal {\it a priori} error estimates for the control by solving an algebraic system of seven unknown functions.

Based on both the facts that the control $u$ is equal to the restriction of the state $y$ on the boundary (see the original equation (\ref{du3})), and that the restriction of an approximation of the state $y$ on the boundary is also an approximation to the control $u$, we realize that the restriction of the numerical error for the state $y$ on the boundary can be used to measure the numerical error of the control $u$ in the $L^{2}(\Gamma)$-norm. This idea is done in the way that the state $y$ and its adjoint state $z$ will be coupled by the original equation (\ref{du3}) and an extra equation (\ref{du8}) as well as by the right-hand side term $y$ of the equation (\ref{du6}), i.e., the control $u$ and the
normal derivative of the adjoint state $z$ along the boundary are cancelled. This is different from the idea in literatures e.g., \cite{Casa2006,Vexler2007,Deckelnick2009,May2013,Apel2017,Of2015,Gunzburger1991,Gunzburger1992,Chowdhury2017}, where both the original
equation (\ref{du3}) and an extra equation (\ref{du8}) were taken into account in variational formulation.

This paper introduces a new variational formulation for Dirichlet boundary control problem of elliptic partial differential equations, based on observations that the state and adjoint state are related through the control on the boundary of the domain, and that such a relation may be imposed in the variational formulation of the adjoint state, i.e., one can substitute the control by the control law (the control is the normal derivative of the adjoint state (up to a factor)). Well-posedness (unique solvability and stability) of the variational problem is
established in the $H^{1}(\Omega)\times H_{0}^{1}(\Omega)$ space for the respective state and adjoint state. A finite element method based on
this formulation is analyzed. It is shown that the conforming $k-$th order finite element approximations to the state and the adjoint state,
in the respective $L^{2}$ and $H^{1}$ norms converge at the rate of order $k-1/2$ on quasi-uniform mesh for conforming element of order $k$. Numerical examples are presented to validate the theory.

This paper is organized as follows. In Section~\ref{SECTION2}, we introduce a new variational setting based on an observation. Section~\ref{SECTION3} is devoted to the unique solvability and stability of the variational problem. In Section~\ref{SECTION4}, we introduce finite element approximation to the variational setting and prove a preliminary result, which will prepare us for the {\it a priori} error estimation on an approximation of the conforming element of order $k$ over quasi-uniform mesh in Section~\ref{SECTION5}. In Section \ref{SECTION6A}, we analyze the stability of the discrete control in $L^{2}(\Gamma)$ norm and $H^{1/2}(\Gamma)$ norm in the sense that the restriction of the discrete state on the boundary is considered as an approximation of the control.  Finally numerical tests are provided in Section~\ref{SECTION6} to support our theory.

\section{A variational formulation}\label{SECTION2}
For any bounded open subset $\omega$ of $\Omega$ with Lipschitz
boundary $\gamma$, let $L^{2}(\gamma)$ and $H^{m}(\omega)$ be the standard Lebesgue and Sobolev spaces equipped with standard norms
$\|\cdot\|_{\gamma}=\|\cdot\|_{L^{2}(\gamma)}$ and $\|\cdot\|_{m,\omega}=\|\cdot\|_{H^{m}(\omega)}$,
$m\in\mathbb{N}$. Note that $H^{0}(\omega)=L^{2}(\omega)$. Denote by $|\cdot|_{m,\omega}$ the
semi-norm in $H^{m}(\omega)$. Similarly, denote by $(\cdot,\cdot)_{\gamma}$ and
$(\cdot,\cdot)_{\omega}$ the $L^{2}$ inner products on $\gamma$
and $\omega$, respectively. We shall omit the symbol $\Omega$ in the notations above if
$\omega=\Omega$.

It is well known that the Dirichlet boundary control problem in (\ref{du1})-(\ref{du3}) is equivalent to the optimality system
\begin{eqnarray}
    \label{du4}
    -\triangle y=f\ \ \ \ \ {\rm in}\ \ \ \ \Omega, \\
    \label{du5}
    y=u\ \ \ \ \ {\rm on}\ \ \ \ \Gamma, \\
    \label{du6}
    -\triangle z=y-y_{d}\ \ \ \ \ {\rm in}\ \ \ \ \Omega, \\
    \label{du7}
    z=0\ \ \ \ \ {\rm on}\ \ \ \ \Gamma, \\
    \label{du8}
    u=\frac{1}{\gamma}\frac{\partial z}{\partial{\bf n}}\ \ \ \ \ \ {\rm on}\ \ \ \ \Gamma,
\end{eqnarray}
where ${\bf n}$ is the unit outer normal to $\Gamma$. Note that these equations must be understood in a weak sense.

To see the idea of variational setting, we consider the following several cases under an assumption that the domain and known data are
respectively satisfied with these cases:

Case one: the state $y\in H^{1/2}(\Omega)$, so $y|_{\Gamma}$ belongs to $L^{2}(\Gamma)$. We know $u\in L^{2}(\Gamma)$ from (\ref{du5}). The equation (\ref{du8}) implies that $\partial z/\partial {\bf n}|_{\Gamma}\in L^{2}(\Gamma)$, this needs the adjoint state to
satisfy $z\in H^{3/2}(\Omega)$.

Case two: $y\in H^{1}(\Omega),y|_{\Gamma}=u\in H^{1/2}(\Gamma)$, the equation (\ref{du8}) means that $\partial z/\partial {\bf n}|_{\Gamma}\in H^{1/2}(\Gamma)$, this requires $z\in H^{2}(\Omega)$.

Case three: $y\in L^{2}(\Omega),y|_{\Gamma}=u\in H^{-1/2}(\Gamma)$ (the dual space of $H^{1/2}(\Gamma)$), the equation (\ref{du8}) requires $\partial z/\partial {\bf n}|_{\Gamma}\in H^{-1/2}(\Gamma)$, this indicates $z\in H^{1}(\Omega)$.

Case four: $y\in H^{3/2}(\Omega),y|_{\Gamma}=u\in H^{1}(\Gamma)$, the equation (\ref{du8}) needs $\partial z/\partial {\bf n}|_{\Gamma}\in H^{1}(\Gamma)$, this requires $z\in H^{5/2}(\Omega)$.

Since natural functional analytical setting of this problem uses $L^{2}(\Gamma)$ as a ``control space", Case one is an ideal choice for the control
$u$, state $y$, and adjoint state $z$. However, it is difficult to bring this characteristics of $y$ and $z$ into their respective variational
formulation if (\ref{du5}) and (\ref{du8}) are regarded as two independent equations. For Cases two and three, it is convenient to incorporate the spaces of $y$ and $z$ into their respective variational formulation, but doing so expands or narrows down the space of the control $u$, and can not provide the variational formulation of $u$ if (\ref{du3}) and (\ref{du8}) are still regarded as two independent equations. Case four further does not only enlarges the space of $u$, but also requires a higher regularity on $y$ and $z$, and bring an unexpected difficulty to variation and computation.

These cases show that it is difficult to keep the compatibility of the spaces of $u,y$ and $z$ and incorporate their respective space into their respective variational formulation. We realize that the state $y$ and adjoint state $z$ are connected by the control $u$ on the boundary (see (\ref{du5}) and (\ref{du8})) as well as by the state $y$ being the right-hand side term of the equation of the adjoint state (see (\ref{du6})), and that it can overcome these difficulties referred above to cancel the control $u$ and to absorb $\displaystyle\frac{1}{\gamma}\frac{\partial z}{\partial {\bf n}}\big|_{\Gamma}=y|_{\Gamma}$ into the variational formulation of $z$ as a boundary condition.

Based on the above observation, the Dirichlet boundary condition in (\ref{du5}) indicates that the control $u$ is equal to the restriction of the state $y$ on the boundary $\Gamma$. Therefore, we simultaneously obtain the control $u$ if the state $y$ is got. The equation (\ref{du8}) is an
additional equation with respect to the adjoint state $z$. Here, we do not regard (\ref{du8}) as an additional equation, instead
we understand it as a boundary condition, through which the state $y$ and its adjoint state $z$ will be coupled. So the control
$u$ can be cancelled in form, but it can be reflected by the state $y$ in essence. It is pointed out that the right hand term of
(\ref{du6}) includes the state variable $y$, through which the adjoint state $z$ is coupled over the whole domain.

Based on this idea, multiplying both sides of (\ref{du4}) by $\psi\in H_{0}^{1}(\Omega)$, and applying integration by parts, we attain
\begin{equation}\label{du8a}
\displaystyle\int_{\Omega}\nabla y\cdot\nabla\psi d{\bf x}=\int_{\Omega}f\psi d{\bf x}.
\end{equation}
Similarly, multiplying both sides of (\ref{du6}) by $\phi\in H^{1}(\Omega)$, an integration by parts leads to
\begin{equation}\label{du8b}
\displaystyle\int_{\Omega}\nabla z\cdot\nabla\phi d{\bf x}-\int_{\Gamma}\frac{\partial z}{\partial{\bf n}}\phi ds=\int_{\Omega}(y-y_{d})\phi d{\bf x}.
\end{equation}
Cancelling $u$ from a combination of (\ref{du5}) and (\ref{du8}) yields to
\begin{equation}\label{du8c}
\displaystyle\frac{\partial z}{\partial{\bf n}}\big|_{\Gamma}=\gamma y|_{\Gamma}.
\end{equation}
Substituting (\ref{du8c}) into (\ref{du8b}), we get
\begin{equation}\label{du8d}
\displaystyle\int_{\Omega}\nabla z\cdot\nabla\phi d{\bf x}-\int_{\Gamma}\gamma y\phi ds=\int_{\Omega}(y-y_{d})\phi d{\bf x}.
\end{equation}

Collecting (\ref{du8a}) and (\ref{du8d}) gives the following variational formulation: Find $(y,z)\in H^{1}(\Omega)\times H_{0}^{1}(\Omega)$ such that
\begin{eqnarray}
    \label{du9}
    (\nabla y,\nabla\psi)=(f,\psi)\ \ \ \ \forall\ \psi\in H_{0}^{1}(\Omega), \\
    \label{du10}
    (\nabla z,\nabla\phi)-(\gamma y,\phi)_{\Gamma}-(y,\phi)=-(y_{d},\phi)\ \ \ \ \forall\ \phi\in H^{1}(\Omega).
\end{eqnarray}

In what follows, we clarify the unique solvability of the variational problem in (\ref{du9})-(\ref{du10}). For a 2D convex polygonal domain, we recall a regularity result of May et al. in \cite{May2013} below, which gives conditions on the domain and data to guarantee the regularity of the solution. To this end,
let $\omega_{\rm max}$ be the maximum interior angle of the polygonal domain $\Omega$, and denote $p_{*}^{\Omega}$ by
\begin{equation}\label{Febsa}
p_{*}^{\Omega}=2\omega_{\rm max}/(2\omega_{\rm max}-\pi),
\end{equation}
including the special case $p_{*}^{\Omega}=\infty$ for $\omega_{\rm max}=\pi/2$. For a higher dimensional convex polygonal domain, we
do not attempt to provide condition on the regularity of the solution, because we put an emphasis on a variational setting and the corresponding finite element approximation. Of course, the regularity theory is more complicated in three-dimensional case.

\begin{lemma}\label{Febsaa}
{\em(\cite{May2013} Lemma 2.9).} Suppose that $f\in L^{2}(\Omega)$ and $y_{d}\in L^{p_{*}^{d}}(\Omega),p_{*}^{d}>2$, and
that $\Omega\subset\mathbb{R}^{2}$ is a bounded convex domain with polygonal boundary $\Gamma$. Let $p_{*}^{\Omega}\geq2$
be defined by {\em(\ref{Febsa})} and $p_{*}: =\min(p_{*}^{d},p_{*}^{\Omega})$. Then, the solution $(y,u)$ of the optimization
problem {\em(\ref{du1})-(\ref{du3})} and the associated adjoint state determined by {\em(\ref{du6})} have the regularity properties
\begin{equation*}
(y,u,z)\in H^{3/2-1/p}(\Omega)\times H^{1-1/p}(\Gamma)\times(H_{0}^{1}(\Omega)\cap W_{p}^{2}(\Omega)),\ \ 2\leq p<p_{*}.
\end{equation*}
\end{lemma}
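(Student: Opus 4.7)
The plan is to prove the lemma by a bootstrap argument on the coupled optimality system \eqref{du4}--\eqref{du8}, trading regularity between the state $y$, adjoint state $z$, and control $u$ via the couplings $u|_\Gamma = y|_\Gamma$ and $u = \gamma^{-1}\partial z/\partial{\bf n}|_\Gamma$. Because $u$ is only known a priori to be in $L^2(\Gamma)$, equations \eqref{du4}--\eqref{du5} must initially be read in the transposition (very weak) sense in the style of Lions and Magenes.

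First I would establish an initial level of regularity. Existence of a minimizer forces $u \in L^2(\Gamma)$, and the very weak formulation on a convex polygon yields $y \in L^2(\Omega)$. Then $y - y_d \in L^2(\Omega)$, so classical $H^2$-regularity for the Dirichlet Laplacian on a convex polygon applied to \eqref{du6}--\eqref{du7} gives $z \in H^2(\Omega) \cap H_0^1(\Omega)$. Taking traces of $\nabla z$, the control law \eqref{du8} then yields $u \in H^{1/2}(\Gamma)$, and a Dirichlet lifting reinserted into \eqref{du4}--\eqref{du5} upgrades $y$ to $H^1(\Omega)$.

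Next I would bootstrap into the $L^p$ scale. In 2D, $H^1(\Omega) \hookrightarrow L^q(\Omega)$ for every $q < \infty$, so $y - y_d \in L^p(\Omega)$ for all $2 \le p \le p_*^d$. On a convex polygon, the Grisvard--Kellogg $W_p^2$ theory for the Dirichlet Laplacian asserts that $-\triangle : W_p^2(\Omega) \cap H_0^1(\Omega) \to L^p(\Omega)$ is an isomorphism precisely for $2 \le p < p_*^\Omega$, with the threshold dictated by the first singular exponent $\pi/\omega_{\rm max}$ at the worst corner. This yields $z \in W_p^2(\Omega) \cap H_0^1(\Omega)$ for all $2 \le p < p_* = \min(p_*^d, p_*^\Omega)$. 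The trace of $\nabla z$ then lies in $W_p^{1-1/p}(\Gamma)$, so \eqref{du8} forces $u \in H^{1-1/p}(\Gamma)$. Finally, plugging this Dirichlet datum together with $f \in L^2(\Omega)$ into \eqref{du4}--\eqref{du5} and using the trace--lifting characterization $\gamma_0 : H^s(\Omega) \to H^{s-1/2}(\Gamma)$ with $s = 3/2 - 1/p$ yields $y \in H^{3/2-1/p}(\Omega)$.

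The main obstacle is the sharp corner-angle-dependent $W_p^2$ theory underpinning the $L^p$ bootstrap. The threshold $p_*^\Omega$ in \eqref{Febsa} is precisely the largest $p$ for which the wedge singular function $r^{\pi/\omega_{\rm max}}\sin(\pi\theta/\omega_{\rm max})$ still belongs to $W_p^2$ near the worst vertex; beyond that exponent the solution decomposes into a regular $W_p^2$ part plus explicit singular contributions, so the direct $W_p^2$ estimate fails. This limitation is why the assertion is strict ($p < p_*$), and it also explains the fractional gap $1/p$ built into the final regularity exponents $3/2 - 1/p$ and $1 - 1/p$.
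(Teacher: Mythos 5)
The paper offers no proof of this lemma at all: it is quoted verbatim from \cite{May2013} (Lemma 2.9), so there is nothing internal to compare your argument against. That said, your bootstrap is essentially the argument by which the cited result is actually established: start from $u\in L^{2}(\Gamma)$ and the transposition solution, get $z\in H^{2}(\Omega)\cap H_{0}^{1}(\Omega)$ from convexity, feed $\partial z/\partial{\bf n}\in H^{1/2}(\Gamma)$ back through (\ref{du8}) and (\ref{du5}), then upgrade to the $W_{p}^{2}$ scale using the Grisvard corner-exponent threshold, which you identify correctly as the source of the formula (\ref{Febsa}) and of the strict inequality $p<p_{*}$. One step deserves more care than you give it: you pass from $z\in W_{p}^{2}(\Omega)$ to $u=\gamma^{-1}\partial z/\partial{\bf n}\in H^{1-1/p}(\Gamma)$ via the trace of $\nabla z$, but on a polygon this trace argument only yields $W_{p}^{1-1/p}$ regularity \emph{edge by edge}. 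Since $1-1/p\geq 1/2$ for $p\geq 2$, membership in $H^{1-1/p}(\Gamma)$ globally requires a compatibility (continuity) condition at the corners, and the normal direction jumps there, so $\partial z/\partial{\bf n}$ has no reason a priori to match up. The cited proof closes this gap by observing that $W_{p}^{2}(\Omega)\hookrightarrow C^{1}(\overline{\Omega})$ for $p>2$ and that $z=0$ along both edges meeting at a corner forces both tangential derivatives, hence all of $\nabla z$, to vanish at each convex corner; the edgewise traces of $\partial z/\partial{\bf n}$ therefore all vanish at the corners and glue into a global $H^{1-1/p}(\Gamma)$ function. Adding that observation would make your sketch a complete proof in outline; everything else (the Sobolev embedding $H^{1}\hookrightarrow L^{q}$ in 2D, the restriction imposed by $y_{d}\in L^{p_{*}^{d}}$, and the final lifting giving $y\in H^{3/2-1/p}(\Omega)$) is correct.
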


Owing to the optimal system (\ref{du4})-(\ref{du8}) equivalent to the problem (\ref{du1})-(\ref{du3}), the regularity of the
solution for the system (\ref{du9})-(\ref{du10}) is guaranteed in terms of Lemma \ref{Febsaa} in case of $p=2$.

\section{Unique solvability and stability}\label{SECTION3}
This section establishes unique solvability for the variational problem in (\ref{du9})-(\ref{du10}), and stability estimate of the control and the state and the adjoint state variables.
\begin{theorem}
For $f\in H^{-1}(\Omega)$ and $y_{d}\in L^{2}(\Omega)$, the system {\em(\ref{du9})-(\ref{du10})} is uniquely solvable, and is stable in the sense that there exists a positive constant
$C_{\gamma}$, depending on $\gamma$, such that
\begin{equation}\label{dudu89}
\gamma^{1/2}||u||_{0,\Gamma}+||y||+||\nabla z||\leq C_{\gamma}\left(||f||_{-1}+||y_{d}||\right).
\end{equation}
\end{theorem}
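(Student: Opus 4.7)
My plan is to first extract the a priori bound (\ref{dudu89}) by choosing two carefully matched test pairs in the coupled variational system, then deduce uniqueness as a corollary, and finally secure existence through an auxiliary problem on the boundary control alone. The key observation driving the stability step is that the mixed gradient pairing $(\nabla y,\nabla z)$ occurs in both (\ref{du9}) and (\ref{du10}) if one tests (\ref{du9}) with $\psi=z\in H_{0}^{1}(\Omega)$ and (\ref{du10}) with $\phi=y\in H^{1}(\Omega)$; subtracting the two resulting identities makes this pairing cancel and produces the energy equality
\begin{equation*}
\gamma\,\|y\|_{0,\Gamma}^{2} + \|y\|^{2} \;=\; (y_{d},y) + (f,z).
\end{equation*}
Since $u=y|_{\Gamma}$, this single identity already controls the $L^{2}(\Gamma)$ norm of the control and the $L^{2}(\Omega)$ mass of the state simultaneously.

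To close the estimate I would then bound $\|\nabla z\|$ separately. Choosing $\phi=z$ in (\ref{du10}) makes the boundary term drop out (since $z|_{\Gamma}=0$) and yields $\|\nabla z\|^{2}=(y-y_{d},z)$, whence $\|\nabla z\|\le C_{P}(\|y\|+\|y_{d}\|)$ by Poincar\'e. Substituting this bound back into the energy identity, estimating $(f,z)\le\|f\|_{-1}\|z\|_{1}$, and invoking Young's inequality to absorb a multiple of $\|y\|^{2}$ on the left delivers (\ref{dudu89}). Uniqueness then follows immediately: applied to the difference of two putative solutions with $f=0$ and $y_{d}=0$, the bound forces $u=0$ on $\Gamma$, $y=0$ a.e.\ in $\Omega$, and $\nabla z=0$; since $y\in H^{1}(\Omega)$, almost-everywhere vanishing upgrades to vanishing in $H^{1}(\Omega)$.

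For existence I would reduce the coupled system to a problem on the control alone: write $y=y_{f}+Eu$ with $y_{f}\in H_{0}^{1}(\Omega)$ the particular solution of $-\Delta y_{f}=f$ and $E$ the harmonic extension operator, decompose $z$ analogously, and translate the remaining boundary identity $\partial z/\partial{\bf n}=\gamma u$ into a Lax--Milgram problem on $u$ governed by the symmetric bilinear form $a(u,v)=(Eu,Ev)+\gamma(u,v)_{\Gamma}$, which is coercive on $L^{2}(\Gamma)$ with constant $\gamma>0$. Alternatively one may invoke the equivalence of (\ref{du9})--(\ref{du10}) with the optimality system (\ref{du4})--(\ref{du8}) of the strictly convex minimization problem (\ref{du1})--(\ref{du3}), which is uniquely solvable by classical arguments. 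The main obstacle is precisely this existence step: because the trial space demands $y\in H^{1}(\Omega)$, one must ensure that the control produced by the reduction lies in $H^{1/2}(\Gamma)$ (not merely $L^{2}(\Gamma)$), which requires a regularity input such as the convex polygonal framework of Lemma~\ref{Febsaa}.
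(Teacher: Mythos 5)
Your proposal is correct and follows essentially the same route as the paper: the same test-function choices ($\psi=z$ in (\ref{du9}), $\phi=y$ and $\phi=z$ in (\ref{du10})) produce the same energy identity $\gamma\|y\|_{0,\Gamma}^{2}+\|y\|^{2}=(y_{d},y)+(f,z)$ and the same auxiliary bound on $\|\nabla z\|$, with uniqueness read off from the stability estimate and existence inherited from the solvability of the optimization problem (\ref{du1})--(\ref{du3}). The only differences are cosmetic: you correctly note that the boundary term $(\gamma y,z)_{\Gamma}$ vanishes for $z\in H_{0}^{1}(\Omega)$ (the paper retains and estimates it anyway), and your Lax--Milgram reduction together with the $H^{1/2}(\Gamma)$-regularity caveat is an alternative packaging of the existence step that the paper handles by invoking the optimality system and Lemma~\ref{Febsaa}.
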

\begin{proof}
We first prove that the variational problem in (\ref{du9})-(\ref{du10}) is solvable. Since the existence of the solution for the
optimization problem in (\ref{du1})-(\ref{du3}) has been proven by introducing the so-called ``solution operator" and using convex
analysis (see Lemma 2.4 in \cite{May2013}), and the first order optimal condition shows that the solution of the optimization problem in (\ref{du1})-(\ref{du3}) satisfies (\ref{du4})-(\ref{du8}). Hence, the system (\ref{du4})-(\ref{du8}) is solvable. Due to the solution of (\ref{du4})-(\ref{du8}) satisfying the variational problem in (\ref{du9})-(\ref{du10}), then (\ref{du9})-(\ref{du10})
 has a solution.

In what follows, we prove the stability of the system in (\ref{du9})-(\ref{du10}). By (\ref{du10}) with $\psi=z$, we obtain
\begin{equation*}
\begin{array}{lll}
||\nabla z||^{2}&=&(\gamma y,z)_{\Gamma}+(y,z)-(y_{d},z)\vspace{2mm}\\
&\leq&\gamma||y||_{-1/2,\Gamma}||z||_{1/2,\Gamma}+||y||_{-1}||z||_{1}+||y_{d}||_{-1}||z||_{1}\vspace{2mm}\\
&\leq& C(\gamma||y||_{0,\Gamma}||z||_{1}+\|y\|\,\|z\|_{1}+\|y_{d}\|\,\|z\|_{1}),
\end{array}
\end{equation*}
which, together with the Poinc\'{a}re inequality, implies
\begin{equation}\label{dudu91}
||z||_{1}\leq C\left(\gamma||y||_{0,\Gamma}+||y||+||y_{d}||\right).
\end{equation}

It follows from (\ref{du10}) with $\psi=y$, (\ref{du9}), the Cauchy-Schwarz and Young inequalities, and (\ref{dudu91}) that
\begin{equation*}
\begin{array}{lll}
\gamma||y||_{0,\Gamma}^{2}+||y||^{2}&=&(\nabla z,\nabla y)+(y_{d},y)\vspace{2mm}\\
&=&(f,z)+(y_{d},y)\leq||f||_{-1}||z||_{1}+\|y_{d}\|\,\|y\|\vspace{2mm}\\
&\leq&C||f||_{-1}\left(\gamma||y||_{0,\Gamma}+||y||+||y_{d}||\right)+\|y_{d}\|\,\|y\|\vspace{2mm}\\
&\leq&C\left(||f||_{-1}+||y_{d}||\right)\left(\gamma||y||_{0,\Gamma}+||y||\right)+C\left(||f||_{-1}^{2}+||y_{d}||^{2}\right),
\end{array}
\end{equation*}
which implies
\begin{equation*}
\gamma||y||_{0,\Gamma}^{2}+||y||^{2}\leq C_{\gamma}\left(||f||_{-1}^{2}+||y_{d}||^{2}\right).
\end{equation*}

Now, (\ref{dudu89}) is a direct consequence of (\ref{dudu91}) and the fact that $u=y$ on $\Gamma$, and the uniqueness of the solution
follows from (\ref{dudu89}) immediately, since the corresponding homogeneous system has vanishing solution. This completes the proof of the
theorem.
\end{proof}

\begin{theorem}\label{dudu93}
Assume that the domain $\Omega$ is convex with Lipshitz boundary. For $f\in H^{-1}(\Omega)$ and $y_{d}\in L^{2}(\Omega)$, there exists
a positive constant $C_{\gamma}$ dependent on $\gamma$ such that
\begin{equation}\label{Dec1}
||\nabla y||\leq C_{\gamma}\left(||f||_{-1}+||y_{d}||\right).
\end{equation}
\end{theorem}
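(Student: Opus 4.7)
The plan is to exploit the extra regularity afforded by the convexity hypothesis to upgrade the already-established $L^{2}$ bound on $y$ to an $H^{1}$ bound, going around the loop $y\to z\to \partial z/\partial{\bf n}\to u=y|_{\Gamma}\to y$ and using that each arrow produces a function one half-derivative better than the naive trace theory predicts because $\Omega$ is convex.

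First I would note that $z\in H_{0}^{1}(\Omega)$ solves $-\triangle z=y-y_{d}$ with $y-y_{d}\in L^{2}(\Omega)$, so the standard $H^{2}$-regularity result for the Poisson problem with homogeneous Dirichlet data on a convex domain (Grisvard) gives $z\in H^{2}(\Omega)$ with
\begin{equation*}
\|z\|_{2}\leq C\|y-y_{d}\|\leq C(\|y\|+\|y_{d}\|).
\end{equation*}
The trace theorem then yields $\partial z/\partial{\bf n}\in H^{1/2}(\Gamma)$ with $\|\partial z/\partial{\bf n}\|_{1/2,\Gamma}\leq C\|z\|_{2}$. Combining this with the coupling boundary relation (\ref{du8c}) produces
\begin{equation*}
\|u\|_{1/2,\Gamma}=\|y\|_{1/2,\Gamma}=\gamma^{-1}\|\partial z/\partial{\bf n}\|_{1/2,\Gamma}\leq C\gamma^{-1}(\|y\|+\|y_{d}\|).
\end{equation*}

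Next I would split $y$ into a harmonic lift plus a homogeneous part: choose $\tilde u\in H^{1}(\Omega)$ with $\tilde u|_{\Gamma}=u$ and $\|\tilde u\|_{1}\leq C\|u\|_{1/2,\Gamma}$ (the standard right inverse of the trace operator on a Lipschitz domain). Then $y-\tilde u\in H_{0}^{1}(\Omega)$ and, testing (\ref{du9}) against an arbitrary $\psi\in H_{0}^{1}(\Omega)$, one has
\begin{equation*}
(\nabla(y-\tilde u),\nabla\psi)=(f,\psi)-(\nabla\tilde u,\nabla\psi),\qquad\forall\,\psi\in H_{0}^{1}(\Omega).
\end{equation*}
Setting $\psi=y-\tilde u$ and applying Cauchy--Schwarz together with Poincar\'e gives $\|\nabla(y-\tilde u)\|\leq C(\|f\|_{-1}+\|\nabla\tilde u\|)$, and hence by the triangle inequality
\begin{equation*}
\|\nabla y\|\leq C\bigl(\|f\|_{-1}+\|u\|_{1/2,\Gamma}\bigr).
\end{equation*}

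Finally I would chain the two estimates and invoke the previous theorem: substituting the bound on $\|u\|_{1/2,\Gamma}$ and then using $\|y\|\leq C_{\gamma}(\|f\|_{-1}+\|y_{d}\|)$ from (\ref{dudu89}) yields $\|\nabla y\|\leq C_{\gamma}(\|f\|_{-1}+\|y_{d}\|)$, which is (\ref{Dec1}). The only delicate point is the first step: one must cite the convex-domain $H^{2}$-regularity of the Dirichlet Poisson problem in a form that applies to merely Lipschitz convex polytopes in $\mathbb{R}^{d}$, and then verify that the trace of the normal derivative really sits in $H^{1/2}(\Gamma)$ with the quantitative bound used above. Everything downstream is routine.
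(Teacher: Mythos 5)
Your proposal is correct and follows essentially the same route as the paper: bound $\|z\|_{2}$ and hence $\|\partial z/\partial{\bf n}\|_{1/2,\Gamma}$ by $\|y\|+\|y_{d}\|$ via convex-domain $H^{2}$ regularity, convert this through the coupling relation into an $H^{1/2}(\Gamma)$ bound on $u$, feed that into the $H^{1}$ a priori estimate for the nonhomogeneous Dirichlet problem for $y$, and close the loop with (\ref{dudu89}). The only difference is that you spell out the trace-lifting argument that the paper subsumes under the phrase ``standard $H^{1}(\Omega)$ a priori estimate.''
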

\begin{proof}
By the standard $H^{1}(\Omega)$ a priori estimate of the problem in (\ref{du4})-(\ref{du5}), and equation (\ref{du8}), we have
\begin{equation}\label{Dec4}
\begin{array}{lll}
||\nabla y||&\leq& C\left(||f|_{-1}+||u||_{1/2,\Gamma}\right)\vspace{2mm}\\
&\leq&\displaystyle C\left(||f|_{-1}+\frac{1}{\gamma}\Big\|\frac{\partial z}{\partial {\bf n}}\Big\|_{1/2,\Gamma}\right).
\end{array}
\end{equation}
The standard $H^{2}(\Omega)$ a priori estimate (see, e.g., \cite{May2013,Casas2009}) of the problem in (\ref{du5})-(\ref{du6}) gives
\begin{equation}\label{Dec5}
\begin{array}{lll}
\displaystyle||z||_{2}+\Big\|\frac{\partial z}{\partial{\bf n}}\Big\|_{1/2,\Gamma}&\leq& C||y-y_{d}||\vspace{2mm}\\
&\leq& C\left(||y||+||y_{d}||\right).
\end{array}
\end{equation}

Now, (\ref{Dec1}) is a direct consequence of (\ref{Dec4}), (\ref{Dec5}), and (\ref{dudu89}).
\end{proof}

\begin{remark}
Due to $z\in H_{0}^{1}(\Omega)$, the Pioncar\'{e} inequality implies $||z||_{1}\leq C||\nabla z||$. Hence, under the
assumption of Theorem \ref{dudu93}, it holds the following stable estimate:
\begin{equation*}
\gamma^{1/2}||u||_{0,\Gamma}+||y||_{1}+||z||_{1}\leq C_{\gamma}\left(||f||_{-1}+||y_{d}|\right).
\end{equation*}
\end{remark}

\section{Finite element approximation and preliminary result}\label{SECTION4}
We introduce the discrete formulation of (\ref{du9})-(\ref{du10}). To this end, let $\mathcal{T}_{h}$ be a
partition of $\Omega$ into triangles (tetrahedra for $d=3$) or parallelograms (parallelepiped
for $d=3$). With each element $K\in\mathcal{T}_{h}$, we associate two parameters $\rho(K)$ and $\sigma(K)$,
where $\rho(K)$ denotes the diameter of the set $K$, and $\sigma(K)$ is the diameter of the largest ball
contained in $K$. Let us define the size of the mesh by $h=\max_{K\in\mathcal{T}_{h}}\rho(K)$.
About the partition, we also assume that there exists a constant $\rho>0$
such that $h/\rho(K)\leq\rho$ for all $K\in\mathcal{T}_{h}$ and $h>0$, i.e., the mesh $\mathcal{T}_{h}$ is quasi-uniform.

Denote $P_{k}(K)$ be the space of polynomials of total degree at most $k$ if $K$ is a simplex, or the
space of polynomials with degree at most $k$ for each variable if
$K$ is a parallelogram/parallelepiped. Define the finite
element space by
\begin{equation*}
V_{h} :=\{v_{h}\in C(\overline{\Omega}) :v_{h}|_{K}\in P_{k}(K), \ \ \forall K\in\mathcal{T}_{h}\}
\end{equation*}
Furthermore, denote $V_{h}^{0}=V_{h}\cap H_{0}^{1}(\Omega)$.

In the rest of this paper, we denote by $C$ a constant
independent of mesh size with different context in different occurrence, and also use the notation $A\lesssim
F$ to represent $A\leq CF$ with a generic constant $C>0$ independent of mesh size. In addition, $A\approx F$ abbreviates
$A\lesssim F\lesssim A$.

The discrete form reads: Find $(y_{h},z_{h})\in V_{h}\times V_{h}^{0}$ such that
\begin{eqnarray}
    \label{du18}
    (\nabla y_{h},\nabla\psi_{h})=(f,\psi_{h})\ \ \ \ \forall\ \psi_{h}\in V_{h}^{0}, \\
    \label{du19}
    (\nabla z_{h},\nabla\phi_{h})-(\gamma y_{h},\phi_{h})_{\Gamma}-(y_{h},\phi_{h})=-(y_{d},\phi_{h})\ \ \ \ \forall\ \phi_{h}\in V_{h}.
\end{eqnarray}
\begin{theorem}\label{du20}
The discrete variational problem in (\ref{du18})-(\ref{du19}) exists a unique solution $(y_{h},z_{h})\in V_{h}\times V_{h}^{0}$.
\end{theorem}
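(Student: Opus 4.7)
The plan is to exploit the fact that (\ref{du18})--(\ref{du19}) is a square finite-dimensional linear system (both trial and test space have dimension $\dim V_h + \dim V_h^0$), so it suffices to prove uniqueness: if $f=0$ and $y_d=0$, then $y_h=0$ and $z_h=0$. Existence of the solution for arbitrary data then follows from the rank-nullity theorem.

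To establish uniqueness, I would use a cross-testing argument that mirrors the continuous stability proof in the previous section. First I take $\psi_h = z_h$ in the homogeneous version of (\ref{du18}); this is legitimate because $z_h \in V_h^0$ is an admissible test function. This gives $(\nabla y_h,\nabla z_h)=0$. Next I take $\phi_h = y_h$ in the homogeneous version of (\ref{du19}), which is admissible because $y_h \in V_h$. This yields
\begin{equation*}
(\nabla z_h,\nabla y_h) - \gamma\,\|y_h\|_{0,\Gamma}^{2} - \|y_h\|^{2} = 0.
\end{equation*}
Subtracting the two identities cancels the coupling term and leaves
\begin{equation*}
\gamma\,\|y_h\|_{0,\Gamma}^{2} + \|y_h\|^{2} = 0,
\end{equation*}
so $y_h \equiv 0$ (using $\gamma > 0$).

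With $y_h=0$ in hand, the homogeneous version of (\ref{du19}) collapses to $(\nabla z_h,\nabla \phi_h)=0$ for every $\phi_h \in V_h$. Taking $\phi_h = z_h \in V_h^0 \subset V_h$ gives $\|\nabla z_h\|=0$, and the Poincar\'e inequality on $H_0^1(\Omega)$ forces $z_h = 0$. Thus the kernel of the discrete operator is trivial, so the square linear system is invertible and (\ref{du18})--(\ref{du19}) admits a unique solution.

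I do not anticipate a genuine obstacle here: the only subtle point is verifying that the chosen test functions are admissible (which they are, because $V_h^0 \subset V_h$), and this is precisely what makes the cross-testing trick work. The whole argument is essentially the discrete analogue of the stability bound (\ref{dudu89}), and it goes through without any hypothesis on the mesh size or regularity beyond the definition of $V_h, V_h^0$.
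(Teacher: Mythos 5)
Your proposal is correct and follows essentially the same route as the paper: reduce to the homogeneous square system, test the state equation with $z_h$ and the adjoint equation with $y_h$ to cancel the coupling term and conclude $y_h=0$, then test the reduced adjoint equation with $z_h$ to get $z_h=0$. No issues.
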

\begin{proof}
Since the existence of the solution is equivalent to its uniqueness for a finite-dimensional system, it is sufficient to
prove that the corresponding homogeneous system has trivial solution. To this end, let $f=0$ and $y_{d}=0$ in (\ref{du18}) and (\ref{du19}), respectively, we get
\begin{eqnarray}
    \label{du12}
    (\nabla y_{h},\nabla\psi_{h})=0\ \ \ \ \forall\ \psi_{h}\in V_{h}^{0}, \\
    \label{du13}
    (\nabla z_{h},\nabla\phi_{h})-(\gamma y_{h},\phi_{h})_{\Gamma}-(y_{h},\phi_{h})=0\ \ \ \ \forall\ \phi_{h}\in V_{h}.
\end{eqnarray}
Taking $\phi_{h}=y_{h}$ in (\ref{du13}) leads to
\begin{equation}\label{du14}
(\nabla z_{h},\nabla y_{h})-(\gamma y_{h},y_{h})_{\Gamma}-||y_{h}||^{2}=0.
\end{equation}
Noticing $z_{h}\in V_{h}^{0}$ gives $(\nabla z_{h},\nabla y_{h})=0$. Combining this with (\ref{du14}) yields to
\begin{equation*}
\displaystyle\int_{\Gamma}\gamma y_{h}^{2}ds+||y_{h}||^{2}=0,
\end{equation*}
which, altogether with the assumption $\gamma>0$, results in $y_{h}=0$.

(\ref{du13}) with $y_{h}=0$ gives
\begin{equation}\label{du16}
(\nabla z_{h},\nabla \phi_{h})=0\ \ \ \ \forall\ \phi_{h}\in V_{h},
\end{equation}
which, in turn, yields to $(\nabla z_{h},\nabla z_{h})=0$, by choosing $\phi_{h}=z_{h}$. Noticing $z_{h}\in V_{h}^{0}$, we get $z_{h}=0$. Thus, the corresponding homogeneous system has vanishing solution.
\end{proof}

\begin{lemma}\label{Dec13}
Assume that $\theta_{1}^{b}\in V_{h}$ and vanishes at all interior nodes of the mesh, and let $h$ be the size of the quasi-uniform mesh $\mathcal{T}_{h}$. It holds the following estimate
\begin{equation}\label{Dec14}
 ||\nabla\theta_{1}^{b}||\lesssim h^{-1/2}||\theta_{1}^{b}||_{L^{2}(\Gamma)}.
\end{equation}
\end{lemma}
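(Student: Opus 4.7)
The plan is to combine a standard element-wise inverse inequality with a "boundary-to-domain" bound that exploits the hypothesis that $\theta_1^b$ has nonzero degrees of freedom only at boundary nodes. I would first write, using the inverse inequality valid element-by-element on a quasi-uniform mesh,
\begin{equation*}
\|\nabla \theta_1^b\|^2 \;=\; \sum_{K\in\mathcal{T}_h}\|\nabla\theta_1^b\|_K^2 \;\lesssim\; h^{-2}\sum_{K\in\mathcal{T}_h}\|\theta_1^b\|_K^2 \;=\; h^{-2}\|\theta_1^b\|^2.
\end{equation*}
It then remains to prove the sharper estimate $\|\theta_1^b\|\lesssim h^{1/2}\|\theta_1^b\|_{L^2(\Gamma)}$, which is the place where the hypothesis of vanishing interior nodal values enters.

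To establish this, I would pass to nodal values via a scaling argument on the reference element. Since $P_k(\hat K)$ is finite-dimensional, all norms are equivalent, so for any $v_h\in V_h$ and any $K\in\mathcal{T}_h$ we have $\|v_h\|_K^2 \approx h^d \sum_{N\in \mathcal{N}(K)}|v_h(N)|^2$, where $\mathcal{N}(K)$ denotes the Lagrange nodes of $K$; analogously, for any face $F\subset\Gamma$ of a boundary element one obtains $\|v_h\|_F^2 \approx h^{d-1}\sum_{N\in\mathcal{N}(F)}|v_h(N)|^2$. Summing these element- and face-wise identities over $\mathcal{T}_h$ and over the collection of boundary faces respectively, and using that each node is shared by only a bounded number of elements/faces (mesh regularity), I get
\begin{equation*}
\|\theta_1^b\|^2 \;\approx\; h^{d}\!\!\!\sum_{N\in\mathcal{N}_h}\!|\theta_1^b(N)|^2, \qquad
\|\theta_1^b\|_{L^2(\Gamma)}^2 \;\approx\; h^{d-1}\!\!\!\sum_{N\in\mathcal{N}_h\cap\Gamma}\!|\theta_1^b(N)|^2,
\end{equation*}
where $\mathcal{N}_h$ is the set of all Lagrange nodes. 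Because $\theta_1^b$ vanishes at every interior node by hypothesis, the first sum reduces to the one over boundary nodes only, and I can conclude
\begin{equation*}
\|\theta_1^b\|^2 \;\approx\; h^d\!\!\!\sum_{N\in\mathcal{N}_h\cap\Gamma}\!|\theta_1^b(N)|^2 \;\approx\; h\,\|\theta_1^b\|_{L^2(\Gamma)}^2.
\end{equation*}
Substituting into the earlier inverse inequality yields the claimed $\|\nabla\theta_1^b\|\lesssim h^{-1/2}\|\theta_1^b\|_{L^2(\Gamma)}$.

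The only place requiring care — and what I anticipate as the main obstacle — is justifying that every boundary node contributes to the boundary $L^2$-norm at the expected order, i.e., the step collapsing $h^{d-1}\sum_{N\in\mathcal{N}_h\cap\Gamma}|\theta_1^b(N)|^2$ to $\|\theta_1^b\|_{L^2(\Gamma)}^2$. This needs that every boundary node belongs to at least one $(d-1)$-dimensional face contained in $\Gamma$ of diameter comparable to $h$, so that boundary-face scaling can be applied; for a conforming quasi-uniform partition of a polygonal/polyhedral domain this is automatic, but one should state it explicitly. Apart from this bookkeeping, all other estimates are routine scaling arguments on the reference element combined with the quasi-uniformity assumption on $\mathcal{T}_h$.
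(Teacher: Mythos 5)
Your proof is correct, and it takes a genuinely different route from the paper's. The paper argues locally near the boundary: it sorts the elements of $\Omega_{h}^{b}$ into three cases according to how many vertices lie on $\Gamma$, establishes on each element (or on a patch $K\cup K'$ when $K$ has no boundary edge) the equivalence of $\|\nabla\theta_{1}^{b}\|_{K}$ and $\|\theta_{1}^{b}\|_{\partial K\cap\Gamma}$ as norms on the local finite-dimensional space of functions vanishing at interior nodes, and then extracts the factor $h_{K}^{-1/2}$ by scaling to the reference element, obtaining (\ref{Dec19}) and (\ref{Dec23})--(\ref{Dec25}) before summing. You instead factor the estimate through the volume $L^{2}$ norm: a standard inverse inequality $\|\nabla\theta_{1}^{b}\|\lesssim h^{-1}\|\theta_{1}^{b}\|$ combined with the identification of both $\|\theta_{1}^{b}\|^{2}$ and $\|\theta_{1}^{b}\|_{L^{2}(\Gamma)}^{2}$ with $h^{d}$- and $h^{d-1}$-weighted sums of squared nodal values, the hypothesis on interior nodes collapsing the volume sum onto the boundary nodes. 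Your argument is cleaner and element-type- and dimension-independent: it dispenses with the case analysis and with the paper's unverified assumptions that in Cases (2) and (3) a suitable neighbour $K'$ can always be chosen. The one genuine point of care --- that every boundary Lagrange node is a node of at least one $(d-1)$-dimensional face contained in $\Gamma$ of diameter $\approx h$, so that the boundary nodal sum really is equivalent to $h^{-(d-1)}\|\theta_{1}^{b}\|_{L^{2}(\Gamma)}^{2}$ --- is exactly the one you flag, and it does hold for a conforming quasi-uniform partition of a polygonal/polyhedral domain (by conformity, a mesh node lying on $\Gamma$ must be a Lagrange node of some boundary face). So the proposal is complete; the only trade-off is that your use of a single global $h$ in the nodal equivalences ties the intermediate estimates to quasi-uniformity, whereas the paper's intermediate bounds are stated with local mesh sizes $h_{K}$.
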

\begin{proof}
Denote $\Omega_{h}^{b}$ the set of element with at least one vertex on the boundary. Since $\theta_{1}^{b}$ vanishes at any node of an
element whose vertices completely contained in the interior of the domain $\Omega$, it's restriction on the element is zero. This implies
that
\begin{equation}\label{Dec14b}
\displaystyle||\nabla\theta_{1}^{b}||^{2}=\sum\limits_{K\in\Omega_{h}^{b}}||\nabla\theta_{1}^{b}||_{K}^{2}.
\end{equation}
For the sake of simplicity, we consider only triangular element in two dimensions as an example, since the similar proof is easily extended
to the other types of element and three-dimensional case. There are three cases as following:

(1). Two vertices of an element $K$ lie on the boundary, i.e., $K$ has an edge $E$ contained in $\Gamma$ ($E=\partial K\cap\Gamma$) (see (Case 1) in Figure \ref{Fig2}). Assuming $||\nabla\theta_{1}^{b}||_{K}=0$ indicates that $\theta_{1}^{b}$ is a constant over the element $K$. And since $\theta_{1}^{b}$ vanishes at internal node of $K$ (there exists at least an internal node such as internal vertex), this shows that $\theta_{1}^{b}$ is zero over $K$, and that $||\nabla\theta_{1}^{b}||_{K} $ is a norm of $\theta_{1}^{b}$ over $K$. Further assuming
$||\theta_{1}^{b}||_{E}=0$, this leads that $\theta_{1}^{b}$ vanishes over $E$, and that $\theta_{1}^{b}$ vanishes at nodes of $E$, and
that $\theta_{1}^{b}$ is zero over $K$. Therefore, $||\theta_{1}^{b}||_{E}$ is another norm of $\theta_{1}^{b}$ over $K$. Since any two
norms are equivalent to each other over a finite-dimension space, we attain
\begin{equation}\label{Dec14c}
||\nabla\theta_{1}^{b}||_{K}\approx C_{K}||\theta_{1}^{b}||_{E},
\end{equation}
where the positive constant $C_{K}$ depends on the size $h_{K}$ of $K$ (and number of dimensions of $V_{h}|_{K}$). To see the dependence
on the size of $K$, we apply scaling argument.

\begin{figure}
    \centering
    \begin{tikzpicture}[scale=1.8]
        \node at (.8,-0.1) {\small{(Case 1)}};
        \draw (.1,.1)--(1.4,.1)--(1.4,1.4)--(.1,1.4)--(.1,.1);
        \draw (.5,.1)--(.7,.5)--(.9,.1);
        \node at (.7,.25) {\scriptsize K};
        \node at (.73,.03) {\scriptsize E};
        \node at (.7,.7) {\scriptsize $\Omega$};
        \node at (.2,.7) {\scriptsize $\Gamma$};

        \node at (2.8,-0.1) {\small{(Case 2)}};
        \draw (2.1,.1)--(3.4,.1)--(3.4,1.4)--(2.1,1.4)--(2.1,.1);
        \draw (2.6,.1)--(2.1,.6)--(2.6,0.6)--(2.6,.1);
        \node at (2.25,.25) {\scriptsize K};
        \node at (2.5,.4) {\scriptsize ${\rm K}^{'}$};
        \node at (2.7,.7) {\scriptsize $\Omega$};
        \node at (2.2,.9) {\scriptsize $\Gamma$};

        \node at (4.8,-0.1) {\small{(Case 3)}};
        \draw (4.1,.1)--(5.4,.1)--(5.4,1.4)--(4.1,1.4)--(4.1,.1);
        \draw (4.7,.1)--(4.3,.5)--(4.3,.1);
        \draw (4.7,.1)--(4.7,.5)--(4.3,.5);
        \draw (4.7,.5)--(5.1,.5)--(4.7,.1);
        \draw (5.1,.5)--(5.1,.1);
        \node at (4.6,.31) {\scriptsize K};
        \node at (4.4,.2) {\scriptsize ${\rm K}^{'}$};
        \node at (4.5,.03) {\scriptsize E};
        \node at (4.7,.7) {\scriptsize $\Omega$};
        \node at (4.2,.7) {\scriptsize $\Gamma$};
        \node at (4.7,.025) {\scriptsize ${\bf x}_{j}$};
    \end{tikzpicture}
    \caption{Three cases of location of an element in $\Omega_{h}^{b}$ for triangular element in two dimensions.}
    \label{Fig2}
\end{figure}
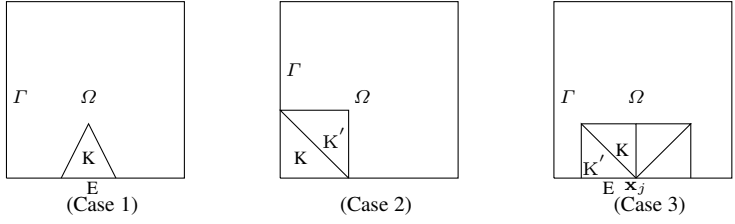

To this end, for any element $K\in\mathcal{T}_{h}$ there exists a bijection $F_{K} :\hat{K}\rightarrow K$, where $\hat{K}$ is the reference element. Denote by $DF_{K}$ the Jacobian matrix and let $J_{K}=|det(DF_{K})|$. It is easy to see that for all element types, the mapping definition
and shape-regularity and quasi-uniformity of the grids imply that
\begin{equation*}
||DF_{K}^{-1}||_{0,\infty, \hat{K}}\approx h_{K}^{-1},\ \ ||J_{K}||_{0,\infty, \hat{K}}\approx h_{K}^{d},\ \ ||DF_{K}||_{0,\infty, \hat{K}}\approx h_{K},
\end{equation*}
which, results in
\begin{equation}\label{Dec16}
\begin{array}{lll}
||\nabla\theta_{1}^{b}||_{K}^{2}&=&\displaystyle\int_{K}\nabla\theta_{1}^{b}\cdot\nabla\theta_{1}^{b}d{\bf x}\vspace{2mm}\\
&=&\displaystyle\int_{\hat{K}}DF_{K}^{-1}\hat{\nabla}\hat{\theta}_{1}^{b}\cdot DF_{K}^{-1}\hat{\nabla}\hat{\theta}_{1}^{b}J_{K}d{\bf\hat{x}}\vspace{2mm}\\
&\approx&\displaystyle h_{K}^{d-2}||\hat{\nabla}\hat{\theta}_{1}^{b}||_{\hat{K}}^{2}.
\end{array}
\end{equation}

Let $E$ be an edge (side) of $K$, and $\hat{E}$ be an edge (side) of $\hat{K}$ with respect to $E$. Similarly, we have
\begin{equation}\label{Dec17}
\begin{array}{lll}
||\theta_{1}^{b}||_{E}^{2}&=&\displaystyle\int_{E}(\theta_{1}^{b})^{2}ds
=\int_{\hat{E}}\frac{|E|}{|\hat{E}|}(\hat{\theta}_{1}^{b})^{2}d\hat{s}\vspace{2mm}\\
&=&\displaystyle\frac{|E|}{|\hat{E}|}||\hat{\theta}_{1}^{b}||_{\hat{E}}^{2}\approx h_{K}^{d-1}||\hat{\theta}_{1}^{b}||_{\hat{E}}^{2}.
\end{array}
\end{equation}
We have from (\ref{Dec14c})
\begin{equation}\label{Dec18}
||\hat{\nabla}\hat{\theta}_{1}^{b}||_{\hat{K}}\approx||\hat{\theta}_{1}^{b}||_{\hat{E}}.
\end{equation}
A combination of (\ref{Dec16}), (\ref{Dec17}), and (\ref{Dec18}) yields to
\begin{equation}\label{Dec19}
||\nabla\theta_{1}^{b}||_{K}\lesssim h_{K}^{-1/2}||\theta_{1}^{b}||_{E}.
\end{equation}

(2). Three vertices of an element $K$ lie on the boundary $\Gamma$, i.e., $K$ has two edges contained in $\Gamma$ (see (Case 2) in Figure \ref{Fig2}). Suppose that one
can always choose an element $K'$ that has an internal vertex and a common edge with $K$. Now consider $\theta_{1}^{b}$ over $K\cup K'$.
Repeating the proof of Case (1), we have
\begin{equation}\label{Dec20}
||\nabla\theta_{1}^{b}||_{K\cup K'}\approx C_{K\cup K'}||\theta_{1}^{b}||_{\Gamma\cap\partial(K\cup K')},
\end{equation}
where $C_{K\cup K'}$ relies on the size $h_{K\cup K'}$, of $K\cup K'$. Using the scaling argument again, we easily obtain
\begin{eqnarray}
\label{Dec21}
||\nabla\theta_{1}^{b}||_{K\cup K'}^{2}\approx h_{K}^{d-2}||\hat{\nabla}\hat{\theta}_{1}^{b}||_{\widehat{K\cup K'}}^{2}\vspace{2mm}\\
\label{Dec22}
||\theta_{1}^{b}||_{\partial(K\cup K')\cap\Gamma}^{2}\approx h_{K}^{d-1}||\hat{\theta}_{1}^{b}||_{\widehat{\partial K\cap\Gamma}}^{2}.
\end{eqnarray}
(\ref{Dec20}) indicates that $||\hat{\nabla}\hat{\theta}_{1}^{b}||_{\widehat{K\cup K'}}\approx||\hat{\theta}_{1}^{b}||_{\widehat{\partial K\cap\Gamma}}$. Hence, we obtain from a combination of (\ref{Dec21}) and (\ref{Dec22})
\begin{equation}\label{Dec23}
||\nabla\theta_{1}^{b}||_{K\cup K'}\lesssim h_{K}^{-1/2}||\theta_{1}^{b}||_{\partial K\cup\Gamma}.
\end{equation}

(3). Only one vertex ${\bf x}_{j}$, of $K$ lies on the boundary $\Gamma$ (see (Case 3) in Figure \ref{Fig2}). Suppose that one can always choose an element $K'$ such that $\partial(K\cup K')$
contains an boundary edge $E$ and $K'$ has a common edge with $K$, i.e., $E\subset\partial(K\cup K')\cap\Gamma$. Similarly to Case (2) or (1), we easily obtain
\begin{equation}\label{Dec24}
||\nabla\theta_{1}^{b}||_{K}\leq||\nabla\theta_{1}^{b}||_{K\cup K'}\lesssim h_{K}^{-1/2}||\theta_{1}^{b}||_{\partial(K\cup K')\cap\Gamma}.
\end{equation}
In fact, in this case, we can also consider $\theta_{1}^{b}$ over the patch $\omega_{{\bf x}_{j}}$ (the set of element
shared ${\bf x}_{j}$ with $K$), of ${\bf x}_{j}$. By using the scaling argument, we can obtain
\begin{equation}\label{Dec25}
||\nabla\theta_{1}^{b}||_{K}\leq||\nabla\theta_{1}^{b}||_{\omega_{{\bf x}_{j}}}\lesssim
h_{K}^{-1/2}||\theta_{1}^{b}||_{\partial(\omega_{{\bf x}_{j}})\cap\Gamma}.
\end{equation}

Collecting (\ref{Dec19}) and (\ref{Dec23})-(\ref{Dec25}), we obtain from (\ref{Dec14b})
\begin{equation*}
\begin{array}{lll}
||\nabla\theta_{1}^{b}||^{2}&=&\displaystyle\sum\limits_{K\in\Omega_{h}^{b}:\ {\rm Case}\ (1)}||\nabla\theta_{1}^{b}||_{K}^{2}+
\sum\limits_{K\in\Omega_{h}^{b}:\ {\rm Case}\ (2)}||\nabla\theta_{1}^{b}||_{K}^{2}+\sum\limits_{K\in\Omega_{h}^{b}:\ {\rm Case}\ (3)}||\nabla\theta_{1}^{b}||_{K}^{2}\vspace{2mm}\\
&\lesssim&\displaystyle\sum\limits_{K\in\Omega_{h}^{b}:\ {\rm Case}\ (1)}h_{K}^{-1}||\theta_{1}^{b}||_{\partial K\cap\Gamma}^{2}+
\sum\limits_{K\in\Omega_{h}^{b}:\ {\rm Case}\ (2)}h_{K}^{-1}||\theta_{1}^{b}||_{\partial K\cap\Gamma}^{2}\vspace{2mm}\\
&\ &\ \displaystyle+\sum\limits_{K\in\Omega_{h}^{b}:\ {\rm Case}\ (3)}h_{K}^{-1}||\theta_{1}^{b}||_{\partial(\omega_{{\bf x}_{j}})\cap\Gamma}^{2}\vspace{2mm}\\
&\lesssim&\displaystyle h^{-1}||\theta_{1}^{b}||_{\Gamma}^{2},
\end{array}
\end{equation*}
which results in the desired estimate (\ref{Dec14}).
\end{proof}

\section{Analysis of error}\label{SECTION5}
Since the control $u$ is equal to the restriction of the state $y$ on the boundary, i.e., $u=y|_{\Gamma}$, it is natural that the restriction
of an approximation $y_{h}$ of $y$ on the boundary is also an approximation of $u$. This shows that $||y-y_{h}||_{0,\Gamma}$ can be used to measure the numerical error of the control, in this sense we write $||u-u_{h}||_{0,\Gamma}=||y-y_{h}||_{0,\Gamma}$.

\begin{theorem}\label{du21}
Assume that $(y,z)\in H^{1}(\Omega)\times H_{0}^{1}(\Omega)$ and $(y_{h},z_{h})\in V_{h}\times V_{h}^{0}$ be the solutions to {\em(\ref{du9})-(\ref{du10})} and {\em(\ref{du18})-(\ref{du19})}, respectively. For $y\in H^{k+1}(\Omega), z\in H^{k+1}(\Omega)\cap H_{0}^{1}(\Omega)$, and for the numerical error of the state variable $y$, there exists a positive constant $C_{\gamma}$ depending on $\gamma$ such that
\begin{equation}\label{du22}
||y-y_{h}||+||\gamma^{1/2}(y-y_{h})||_{0,\Gamma}\leq C_{\gamma}h^{k-1/2}\left(|y|_{k+1}+|z|_{k+1}\right).
\end{equation}
\end{theorem}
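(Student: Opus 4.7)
The plan is to split the errors $e_y=y-y_h$ and $e_z=z-z_h$ using standard Lagrange interpolants $I_h y$, $I_h z$, writing $\eta_1=y-I_h y$, $\eta_2=z-I_h z$, $\theta_1=y_h-I_h y\in V_h$, and $\theta_2=z_h-I_h z\in V_h^0$, so that $e_y=\eta_1-\theta_1$ and $e_z=\eta_2-\theta_2$. The key structural step is to further decompose $\theta_1=\theta_1^i+\theta_1^b$, where $\theta_1^i\in V_h^0$ vanishes at boundary nodes and $\theta_1^b$ vanishes at interior nodes, since this is exactly the setting to which Lemma \ref{Dec13} applies.

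The engine of the proof is the adjoint Galerkin orthogonality $(\nabla e_z,\nabla\phi_h)-(\gamma e_y,\phi_h)_\Gamma-(e_y,\phi_h)=0$ tested with $\phi_h=-\theta_1=\eta_1-e_y\in V_h$. After rearranging, and using the state Galerkin orthogonality $(\nabla e_y,\nabla\psi_h)=0$ for $\psi_h\in V_h^0$ (with $\psi_h=-\theta_2\in V_h^0$) to rewrite $(\nabla e_z,\nabla e_y)=(\nabla\eta_2,\nabla e_y)$, I obtain the fundamental identity
\begin{equation*}
\gamma||e_y||_{0,\Gamma}^2+||e_y||^2=(\nabla\eta_2,\nabla e_y)-(\nabla e_z,\nabla\eta_1)+(\gamma e_y,\eta_1)_\Gamma+(e_y,\eta_1).
\end{equation*}

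Next I would produce an auxiliary $H^1$ bound on $e_y$. Testing the state orthogonality with $\theta_1^i\in V_h^0$ gives $||\nabla\theta_1^i||\lesssim||\nabla\eta_1||+||\nabla\theta_1^b||$, while Lemma \ref{Dec13} yields $||\nabla\theta_1^b||\lesssim h^{-1/2}||\theta_1^b||_{0,\Gamma}=h^{-1/2}||\theta_1||_{0,\Gamma}\leq h^{-1/2}(||e_y||_{0,\Gamma}+||\eta_1||_{0,\Gamma})$, so
\begin{equation*}
||\nabla e_y||\lesssim||\nabla\eta_1||+h^{-1/2}\bigl(||e_y||_{0,\Gamma}+||\eta_1||_{0,\Gamma}\bigr).
\end{equation*}
In parallel, testing the adjoint orthogonality with $\phi_h=\theta_2\in V_h^0$ annihilates the boundary term and delivers $||\nabla\theta_2||\lesssim||\nabla\eta_2||+||e_y||$, hence $||\nabla e_z||\lesssim||\nabla\eta_2||+||e_y||$.

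The last step is to bound the four terms of the identity using Cauchy--Schwarz, the standard interpolation estimates $||\eta_1||\lesssim h^{k+1}|y|_{k+1}$, $||\nabla\eta_1||\lesssim h^k|y|_{k+1}$, $||\eta_1||_{0,\Gamma}\lesssim h^{k+1/2}|y|_{k+1}$ (from a multiplicative trace inequality), the analogous estimates for $\eta_2$, and Young's inequality with $\gamma$-dependent weights. The critical contribution is $|(\nabla\eta_2,\nabla e_y)|\lesssim h^k|z|_{k+1}\bigl(h^k|y|_{k+1}+h^{-1/2}||e_y||_{0,\Gamma}\bigr)$, where the second summand is split as $\frac{\gamma}{4}||e_y||_{0,\Gamma}^2+\frac{C}{\gamma}h^{2k-1}|z|_{k+1}^2$; this is what fixes the rate at $h^{k-1/2}$. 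The remaining terms are of order $h^{2k}$ or better and are absorbed into $\frac12||e_y||^2+\frac{\gamma}{2}||e_y||_{0,\Gamma}^2$ on the left, after which taking square roots gives (\ref{du22}).

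The main obstacle is the careful balancing forced by Lemma \ref{Dec13}: trading $||\nabla\theta_1^b||$ for $h^{-1/2}||e_y||_{0,\Gamma}$ couples the interior $H^1$ error of $y$ to its own boundary $L^2$ error, and it is precisely this coupling (and the $h^{-1/2}$ loss it introduces) that both enables the estimate and limits the exponent to $k-1/2$. Choosing the test function $-\theta_1$ (rather than, say, $e_y$ or $\theta_1^b$) in the adjoint orthogonality is essential so that $\gamma||e_y||_{0,\Gamma}^2$ appears explicitly on the left and can be absorbed against the $h^{-1/2}\,||e_y||_{0,\Gamma}$ term on the right through a $\gamma$-weighted Young inequality.
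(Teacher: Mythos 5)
Your proof is correct, and it rests on the same core mechanism as the paper's: the splitting of the discrete error into an interior part $\theta_1^i\in V_h^0$ and a boundary part $\theta_1^b$, Lemma \ref{Dec13} to trade $\|\nabla\theta_1^b\|$ for $h^{-1/2}\|\theta_1\|_{0,\Gamma}$, and a $\gamma$-weighted Young inequality that absorbs the resulting $h^{k-1/2}\|y-y_h\|_{0,\Gamma}$ contribution into the boundary term on the left; this is exactly where the exponent $k-1/2$ comes from in both arguments. The route differs in the bookkeeping. The paper uses the Ritz projections $R_h$ and $R_h^0$ rather than Lagrange interpolants; their orthogonality makes the cross term $(\nabla\theta_2,\nabla\theta_1)$ vanish outright and reduces $(\nabla\eta_2,\nabla\theta_1)$ to $(\nabla\eta_2,\nabla\theta_1^b)$ with no auxiliary estimate, so the paper controls $\gamma\|\theta_1\|_{0,\Gamma}^2+\|\theta_1\|^2$ and finishes with a triangle inequality. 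Your interpolant-based version puts $\gamma\|e_y\|_{0,\Gamma}^2+\|e_y\|^2$ directly on the left, which is cleaner at the end, but the price is the two auxiliary bounds $\|\nabla e_y\|\lesssim\|\nabla\eta_1\|+h^{-1/2}(\|e_y\|_{0,\Gamma}+\|\eta_1\|_{0,\Gamma})$ and $\|\nabla e_z\|\lesssim\|\nabla\eta_2\|+\|e_y\|$, neither of which the paper needs here (the latter appears only later, in the proof of Theorem \ref{du38}). Both routes give the same rate for the same reason; the paper's is shorter once the Ritz-projection properties (\ref{Nov2}) and (\ref{du34}) are granted, while yours is self-contained modulo standard interpolation and multiplicative trace estimates. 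One cosmetic slip: with your conventions $-\theta_1=e_y-\eta_1$, not $\eta_1-e_y$, but since the orthogonality relation is linear the identity you derive is nevertheless correct.
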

\begin{proof}
Denote $R_{h}: H^{1}(\Omega)\rightarrow V_{h}$ the Ritz projection operator by
\begin{equation}\label{Nov1}
(\nabla(R_{h}v),\nabla v_{h})=(\nabla v, \nabla v_{h}),\ \ (v-R_{h}v,1)=0,\ \ \ \forall v_{h}\in V_{h}.
\end{equation}
Recalling the properties of the Ritz projection \cite{Brezz1991,Brenner1994} as following
\begin{equation}\label{Nov2}
||v-R_{h}v||\lesssim h^{k}|v|_{k}, ||\nabla(v-R_{h}v)||\lesssim h^{k-1}|v|_{k},\ \ \forall\ v\in H^{m}(\Omega), 0<k\leq m\leq 3.
\end{equation}
Setting $\eta_{1}=y-R_{h}y$ and $\theta_{1}=R_{h}y-y_{h}$ gives $y-y_{h}=\eta_{1}+\theta_{1}$. We have from triangle inequality
and (\ref{Nov2})
\begin{equation}\label{Nov3}
||y-y_{h}||\leq||\eta_{1}||+||\theta_{1}||\lesssim h^{k+1}|y|_{k+1}+||\theta_{1}||.
\end{equation}

The trace inequality and the properties, (\ref{Nov2}), of the Ritz projection imply that
\begin{equation}\label{du23}
\begin{array}{lll}
||\gamma^{1/2}(y-y_{h})||_{0,\Gamma}&\leq&\gamma^{1/2}||\eta_{1}||_{0,\Gamma}+||\gamma^{1/2}\theta_{1}||_{0,\Gamma}\vspace{2mm}\\
&\lesssim&\gamma^{1/2}||\eta_{1}||^{1/2}||\eta_{1}||_{1}^{1/2}+||\gamma^{1/2}\theta_{1}||_{0,\Gamma}\vspace{2mm}\\
&\lesssim&\gamma^{1/2}\left(||\eta_{1}||+||\nabla\eta_{1}||^{1/2}||\eta_{1}||^{1/2}\right)+||\gamma^{1/2}\theta_{1}||_{0,\Gamma)}\vspace{2mm}\\
&\lesssim&\gamma^{1/2}h^{k+1/2}|y|_{k+1}+||\gamma^{1/2}\theta_{1}||_{0,\Gamma}.
\end{array}
\end{equation}
(\ref{Nov3}) and (\ref{du23}) indicates that we only need to estimate $||\theta_{1}||$ and $||\gamma^{1/2}\theta_{1}||_{0,\Gamma}$ in order to estimate $||y-y_{h}||+||\gamma^{1/2}(y-y_{h})||_{0,\Gamma}$. To this end, let $R_{h}^{0}: H_{0}^{1}(\Omega)\rightarrow V_{h}^{0}$ be the Ritz projection operator by
\begin{equation*}
(\nabla(R_{h}^{0}v),\nabla v_{h})=(\nabla v,\nabla v_{h})\ \ \ \ \forall\ v_{h}\in V_{h}^{0}.
\end{equation*}
Again recalling the properties of the Ritz projection \cite{Brezz1991,Brenner1994} as following
\begin{equation}\label{du34}
||\nabla(v-R_{h}^{0}v)||\lesssim h^{k-1}|v|_{k},\ \ \ \forall\ v\in H^{m}(\Omega)\cap H_{0}^{1}(\Omega), 0<k\leq m\leq 3.
\end{equation}

Setting $\eta_{2}=z-R_{h}^{0}z,\theta_{2}=R_{h}^{0}z-z_{h}$ gives $z-z_{h}=\eta_{2}+\theta_{2}$. From (\ref{du10}) and (\ref{du19}), we obtain the following orthogonality
\begin{equation}\label{du25}
(\nabla(z-z_{h}),\nabla\phi_{h})-(\gamma(y-y_{h}),\phi_{h})_{\Gamma}-(y-y_{h},\phi_{h})=0,\ \ \forall\ \phi_{h}\in V_{h}.
\end{equation}
Especially taking $\phi_{h}=\theta_{1}\in V_{h}$ in (\ref{du25}) yields to
\begin{equation*}
(\nabla\eta_{2}+\nabla\theta_{2},\nabla\theta_{1})-(\gamma(\eta_{1}+\theta_{1}),\theta_{1})_{\Gamma}-(\eta_{1}+\theta_{1},\theta_{1})=0,
\end{equation*}
which results in
\begin{equation}\label{du26}
||\gamma^{1/2}\theta_{1}||_{0,\Gamma}^{2}+||\theta_{1}||^{2}=(\nabla\eta_{2},\nabla\theta_{1})+
(\nabla\theta_{2},\nabla\theta_{1})-(\gamma\eta_{1},\theta_{1})_{\Gamma}-(\eta_{1},\theta_{1})
\end{equation}

From (\ref{du9}) and (\ref{du18}), we get the following orthogonal property
\begin{equation}\label{du27}
(\nabla(y-y_{h}),\nabla\psi_{h})=0,\ \ \ \forall\ \psi_{h}\in V_{h}^{0}.
\end{equation}
Taking $\psi_{h}=\theta_{2}\in V_{h}^{0}$ in (\ref{du27}) yields to
\begin{equation}\label{du28}
(\nabla\theta_{2},\nabla\theta_{1})=-(\nabla\eta_{1},\nabla\theta_{2})=0.
\end{equation}
In the second step above, we apply the orthogonal property of the Ritz projection, because of $\theta_{2}\in V_{h}^{0}\subset V_{h}$.
Combining (\ref{du26}) with (\ref{du28}), we attain
\begin{equation}\label{du29}
||\gamma^{1/2}\theta_{1}||_{0,\Gamma}^{2}+||\theta_{1}||^{2}=(\nabla\eta_{2},\nabla\theta_{1})
-(\gamma\eta_{1},\theta_{1})_{\Gamma}-(\eta_{1},\theta_{1}).
\end{equation}

In what follows, we estimate each term on the right-hand side of (\ref{du29}). In terms of the proof of (\ref{Nov3}) and (\ref{du23}),
we immediately obtain the estimates of the last two terms on the right-hand side of (\ref{du29})
\begin{equation}\label{du30}
|-(\eta_{1},\theta_{1})|\lesssim h^{k+1}|y|_{k+1}||\theta_{1}||
\end{equation}
and
\begin{equation}\label{du31}
|-(\gamma\eta_{1},\theta_{1})_{\Gamma}| \lesssim \gamma^{1/2}h^{k+1/2}|y|_{k+1}||\gamma^{1/2}\theta_{1}||_{0,\Gamma}.
\end{equation}

To estimate the first term of on the right-hand side of (\ref{du29}), we decompose $\theta_{1}$ into $\theta_{1}^{i}$ and $\theta_{1}^{b}$,
where the value of $\theta_{1}^{i}$ at the internal node equals to the one of $\theta_{1}$ at the corresponding node, and
the value of $\theta_{1}^{i}$ at the boundary node is zero; the value of $\theta_{1}^{b}$ at the internal node is zero, and the value of
$\theta_{1}^{b}$ at boundary node equals to the one of
$\theta_{1}$ at the corresponding node. Obviously, $\theta_{1}=\theta_{1}^{i}+\theta_{1}^{b}$.

Noticing $\theta_{1}^{i}\in V_{h}^{0},\theta_{1}^{b}\in V_{h}$, we have from the definition of the Ritz projection
\begin{equation}\label{du32}
\begin{array}{lll}
(\nabla\eta_{2},\nabla\theta_{1})&=&(\nabla\eta_{2},\nabla\theta_{1}^{i}+\nabla\theta_{1}^{b})\vspace{2mm}\\
&=&(\nabla\eta_{2},\nabla\theta_{1}^{b})\leq\|\nabla\eta_{2}\|\,\|\nabla\theta_{1}^{b}\|.
\end{array}
\end{equation}
We further derive from Lemma \ref{Dec13}, together with $\theta_{1}^{b}=\theta_{1}$ on the boundary $\Gamma$
\begin{equation}\label{du33}
\begin{array}{lll}
(\nabla\eta_{2},\nabla\theta_{1})&\lesssim& h^{-1/2}\|\nabla\eta_{2}\|\,\|\theta_{1}^{b}\|_{0,\Gamma}\vspace{2mm}\\
&=&h^{-1/2}\gamma^{-1/2}\|\nabla\eta_{2}\|\,\|\gamma^{1/2}\theta_{1}\|_{0,\Gamma}.
\end{array}
\end{equation}

By combining (\ref{du29})-(\ref{du31}) with (\ref{du33}), and applying the properties, (\ref{du34}), of the Ritz projection, and
Young inequality, we obtain
\begin{equation*}
\begin{array}{lll}
||\gamma^{1/2}\theta_{1}||_{0,\Gamma}^{2}+||\theta_{1}||^{2}&\leq& Ch^{-1/2}\gamma^{-1/2}\|\nabla\eta_{2}\|\,\|\gamma^{1/2}\theta_{1}\|_{0,\Gamma}\vspace{2mm}\\
&\ &\ +Ch^{k+1}|y|_{k+1}||\theta_{1}||+C\gamma^{1/2}h^{k+1/2}|y|_{k+1}||\gamma^{1/2}\theta_{1}||_{0,\Gamma}\vspace{2mm}\\
&\leq&Ch^{-1/2}\gamma^{-1/2}h^{k}|z|_{k+1}||\gamma^{1/2}\theta_{1}||_{0,\Gamma}\vspace{2mm}\\
&\ &\ +Ch^{k+1}|y|_{k+1}||\theta_{1}||+C\gamma^{1/2}h^{k+1/2}|y|_{k+1}||\gamma^{1/2}\theta_{1}||_{0,\Gamma}\vspace{2mm}\\
&\leq&C\gamma^{-1}h^{2k-1}|z|_{k+1}^{2}+||\gamma^{1/2}\theta_{1}||_{0,\Gamma}^{2}/4+Ch^{2(k+1)}|y|_{k+1}^{2}\vspace{2mm}\\
&\ &\ +||\theta_{1}||^{2}/2+C\gamma h^{2k+1}|y|_{k+1}^{2}+||\gamma^{1/2}\theta_{1}||_{0,\Gamma}^{2}/4,
\end{array}
\end{equation*}
which, implies
\begin{equation}\label{du36}
||\gamma^{1/2}\theta_{1}||_{0,\Gamma}^{2}+||\theta_{1}||^{2}\leq
C_{\gamma}h^{2k-1}\left(|z|_{k+1}^{2}+|y|_{k+1}^{2}\right).
\end{equation}

Collecting (\ref{Nov3}), (\ref{du23}), and (\ref{du36}), we get
\begin{equation*}
||\gamma^{1/2}(y-y_{h})||_{0,\Gamma}^{2}+||y-y_{h}||^{2}\leq
C_{\gamma}h^{2k-1}\left(|z|_{k+1}^{2}+|y|_{k+1}^{2}\right),
\end{equation*}
which results in the desired estimate (\ref{du22}).
\end{proof}

\begin{theorem}\label{du53a}
Assume that $(y,z)\in H^{1}(\Omega)\times H_{0}^{1}(\Omega)$ and $(y_{h},z_{h})\in V_{h}\times V_{h}^{0}$ be the solutions to {\em(\ref{du9})-(\ref{du10})} and {\em(\ref{du18})-(\ref{du19})}, respectively. For $y\in H^{k}(\Omega), z\in H^{k+1}(\Omega)\cap H_{0}^{1}(\Omega)$, and for the numerical error of the state variable $y$, there exists a positive constant $C_{\gamma}$ depending on $\gamma$ such that
\begin{equation}\label{du53b}
||y-y_{h}||+||\gamma^{1/2}(y-y_{h})||_{0,\Gamma}\leq C_{\gamma}h^{k-1/2}\left(|y|_{k}+|z|_{k+1}\right).
\end{equation}
\end{theorem}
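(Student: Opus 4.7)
The plan is to mirror the proof of Theorem~\ref{du21} essentially line for line, replacing only those Ritz projection estimates that depend on the regularity of $y$. Setting $\eta_1=y-R_h y$, $\theta_1=R_h y-y_h$, $\eta_2=z-R_h^0 z$, $\theta_2=R_h^0 z-z_h$, the bounds from (\ref{Nov2}) now read $\|\eta_1\|\lesssim h^k|y|_k$ and $\|\nabla\eta_1\|\lesssim h^{k-1}|y|_k$ (since only $y\in H^k(\Omega)$ is assumed), while $\|\nabla\eta_2\|\lesssim h^k|z|_{k+1}$ is unchanged because $z$ retains its full regularity. The triangle and trace inequalities immediately give $\|y-y_h\|\lesssim h^k|y|_k+\|\theta_1\|$ and $\|\gamma^{1/2}(y-y_h)\|_{0,\Gamma}\lesssim\gamma^{1/2}h^{k-1/2}|y|_k+\|\gamma^{1/2}\theta_1\|_{0,\Gamma}$, the latter through $\|\eta_1\|_{0,\Gamma}\lesssim\|\eta_1\|^{1/2}\|\eta_1\|_1^{1/2}\lesssim h^{k-1/2}|y|_k$ exactly as in (\ref{du23}). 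Thus it remains to estimate $\|\theta_1\|$ and $\|\gamma^{1/2}\theta_1\|_{0,\Gamma}$.

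For these quantities, the Galerkin orthogonality identities (\ref{du25}) and (\ref{du27}) are untouched. Testing (\ref{du25}) with $\phi_h=\theta_1$ and (\ref{du27}) with $\psi_h=\theta_2$, and invoking Ritz orthogonality, reproduces (\ref{du29})
\begin{equation*}
\|\gamma^{1/2}\theta_1\|_{0,\Gamma}^2+\|\theta_1\|^2=(\nabla\eta_2,\nabla\theta_1)-(\gamma\eta_1,\theta_1)_\Gamma-(\eta_1,\theta_1).
\end{equation*}
The last two terms are bounded by $h^k|y|_k\|\theta_1\|$ and $\gamma^{1/2}h^{k-1/2}|y|_k\|\gamma^{1/2}\theta_1\|_{0,\Gamma}$ respectively, via the revised Ritz and trace estimates. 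The first term is treated exactly as before: decompose $\theta_1=\theta_1^i+\theta_1^b$, drop $\theta_1^i\in V_h^0$ by Ritz orthogonality, apply Lemma~\ref{Dec13} to obtain $\|\nabla\theta_1^b\|\lesssim h^{-1/2}\gamma^{-1/2}\|\gamma^{1/2}\theta_1\|_{0,\Gamma}$, and combine with $\|\nabla\eta_2\|\lesssim h^k|z|_{k+1}$ to arrive at $Ch^{k-1/2}\gamma^{-1/2}|z|_{k+1}\|\gamma^{1/2}\theta_1\|_{0,\Gamma}$. Young's inequality then absorbs $\|\theta_1\|$ and $\|\gamma^{1/2}\theta_1\|_{0,\Gamma}$ into the left-hand side and delivers
\begin{equation*}
\|\gamma^{1/2}\theta_1\|_{0,\Gamma}^2+\|\theta_1\|^2\leq C_\gamma h^{2k-1}\bigl(|y|_k^2+|z|_{k+1}^2\bigr),
\end{equation*}
which combined with the $\eta_1$ estimates yields (\ref{du53b}).

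There is no genuine obstacle; the argument is essentially bookkeeping. What is worth flagging is why lowering the regularity of $y$ from $H^{k+1}$ to $H^k$ does not degrade the global rate: the rate $h^{k-1/2}$ in Theorem~\ref{du21} is already dictated by the boundary/trace contribution from $\nabla\eta_2$ handled via Lemma~\ref{Dec13}, not by the interior bound on $\eta_1$. Reducing the $y$-regularity only weakens the subdominant $h^k|y|_k\|\theta_1\|$ term, which after Young's inequality contributes $h^{2k}|y|_k^2$ and is absorbed into the target $h^{2k-1}$ since $h\leq 1$, and sharpens the boundary contribution to $h^{2k-1}|y|_k^2$, both consistent with the claimed order.
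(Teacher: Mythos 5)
Your proposal is correct and is exactly the argument the paper intends: the paper's own proof of Theorem~\ref{du53a} is a one-sentence reference back to the proof of Theorem~\ref{du21}, replacing the Ritz projection estimates for $y$ by their $H^{k}$ versions, which is precisely what you carry out in detail. Your closing observation that the $h^{k-1/2}$ rate is governed by the $(\nabla\eta_{2},\nabla\theta_{1})$ term via Lemma~\ref{Dec13}, so that the reduced regularity of $y$ does not degrade the order, is also accurate.
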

\begin{proof}
Following the idea of the proof in Theorem \ref{du21}, using $y\in H^{k}(\Omega)$ instead of $y\in H^{k+1}(\Omega)$ while concerning the Ritz projection of $y$, we obtain the desired estimate (\ref{du53b}).
\end{proof}

\begin{remark}\label{du37}
As pointed at the beginning of this section, we understand $||u-u_{h}||_{L^{2}(\Gamma)}$ as $||y-y_{h}||_{0,\Gamma}$.
For $y\in H^{k+1}(\Omega),z\in H_{0}^{1}(\Omega)\cap H^{k+1}(\Omega)$, Theorem \ref{du21} gives the control an estimate
\begin{equation*}
||u-u_{h}||_{0,\Gamma}\leq C_{\gamma}h^{k-1/2}\left(|y|_{k+1}+|z|_{k+1}\right);
\end{equation*}
For $y\in H^{k}(\Omega),z\in H_{0}^{1}(\Omega)\cap H^{k+1}(\Omega)$, Theorem \ref{du53a} gives the control an estimate
\begin{equation*}
||u-u_{h}||_{0,\Gamma}\leq C_{\gamma}h^{k-1/2}\left(|y|_{k}+|z|_{k+1}\right).
\end{equation*}
\end{remark}

\begin{theorem}\label{du38}
Assume that $(y,z)\in H^{1}(\Omega)\times H_{0}^{1}(\Omega)$ and $(y_{h},z_{h})\in V_{h}\times V_{h}^{0}$ be the solutions to {\em(\ref{du9})-(\ref{du10})} and {\em(\ref{du18})-(\ref{du19})}, respectively. For $y\in H^{k+1}(\Omega), z\in H^{k+1}(\Omega)\cap H_{0}^{1}(\Omega)$, the numerical errors of the adjoint state $z$ is bounded by
\begin{equation}\label{du39}
||\nabla(z-z_{h})||\leq C_{\gamma}h^{k-1/2}(|y|_{k+1}+|z|_{k+1});
\end{equation}
For $y\in H^{k}(\Omega), z\in H^{k+1}(\Omega)\cap H_{0}^{1}(\Omega)$, the numerical errors of the adjoint state $z$ is bounded by
\begin{equation}\label{du53c}
||\nabla(z-z_{h})||\leq C_{\gamma}h^{k-1/2}\left(|y|_{k}+|z|_{k+1}\right).
\end{equation}
\end{theorem}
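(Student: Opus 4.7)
The plan is to mimic the decomposition used in the proof of Theorem \ref{du21}. I would write $z-z_h=\eta_2+\theta_2$ with $\eta_2=z-R_h^0 z$ and $\theta_2=R_h^0 z-z_h\in V_h^0$, and then bound the two pieces of $\|\nabla(z-z_h)\|$ separately via the triangle inequality. The $\eta_2$ piece is handled immediately by the Ritz projection estimate \eqref{du34}, which gives $\|\nabla\eta_2\|\lesssim h^{k}|z|_{k+1}$, so the whole game reduces to estimating $\|\nabla\theta_2\|$.

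For $\|\nabla\theta_2\|$, I would test the Galerkin orthogonality \eqref{du25} against $\phi_h=\theta_2$. The crucial observation is that $\theta_2\in V_h^0$, hence $\theta_2|_\Gamma=0$, so the boundary term $(\gamma(y-y_h),\theta_2)_\Gamma$ vanishes identically. In addition, the defining property of the Ritz projection $R_h^0$ forces $(\nabla\eta_2,\nabla\theta_2)=0$. What remains is
\begin{equation*}
\|\nabla\theta_2\|^2=(\nabla(z-z_h),\nabla\theta_2)=(y-y_h,\theta_2),
\end{equation*}
and then Cauchy--Schwarz together with the Poincar\'e inequality on $H_0^1(\Omega)$ yields $\|\nabla\theta_2\|\lesssim\|y-y_h\|$.

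At this stage I would invoke Theorem \ref{du21} under the first regularity hypothesis to obtain $\|y-y_h\|\leq C_\gamma h^{k-1/2}(|y|_{k+1}+|z|_{k+1})$, which combined with the triangle inequality and the Ritz bound on $\|\nabla\eta_2\|$ (noting $h^k\leq h^{k-1/2}$ up to a constant for $h$ bounded) produces \eqref{du39}. The second estimate \eqref{du53c}, valid under the weaker hypothesis $y\in H^k(\Omega)$, follows by exactly the same argument with Theorem \ref{du53a} replacing Theorem \ref{du21} in the final step.

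There is essentially no genuine obstacle here beyond recognizing the right test function: the choice $\phi_h=\theta_2\in V_h^0$ makes the troublesome trace term $(\gamma(y-y_h),\phi_h)_\Gamma$ disappear, which is precisely why $\|\nabla\theta_2\|$ inherits the $h^{k-1/2}$ rate directly from $\|y-y_h\|$ without suffering any further loss from the boundary. Everything else is a routine combination of the Ritz projection bounds, Cauchy--Schwarz, and Poincar\'e, so the proof is very short once Theorems \ref{du21} and \ref{du53a} are in hand.
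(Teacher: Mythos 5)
Your proposal is correct and follows essentially the same route as the paper: the same splitting $z-z_h=\eta_2+\theta_2$ via the Ritz projection $R_h^0$, testing the orthogonality \eqref{du25} with $\phi_h=\theta_2\in V_h^0$ so that the boundary term drops, and then Cauchy--Schwarz plus Poincar\'e to get $\|\nabla\theta_2\|\lesssim\|y-y_h\|$ before invoking Theorems \ref{du21} and \ref{du53a}. The only cosmetic difference is that the paper uses the Pythagorean identity $\|\nabla(z-z_h)\|^2=\|\nabla\eta_2\|^2+\|\nabla\theta_2\|^2$ where you use the triangle inequality; both yield the same bound.
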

\begin{proof}
 Recalling the decomposition of the error $z-z_{h}$ in the proof of Theorem \ref{du21}, we obtain from the orthogonal property of the Ritz projection
\begin{equation}\label{du41}
||\nabla(z-z_{h})||^{2}=||\nabla\eta_{2}||^{2}+||\nabla\theta_{2}||^{2},
\end{equation}
which, together with the property (\ref{du34}) of the Ritz projection, results in,
\begin{equation}\label{du24}
||\nabla(z-z_{h})||\lesssim h^{k}|z|_{k+1}+||\nabla\theta_{2}||.
\end{equation}
The inequality (\ref{du24}) means that it is sufficient to only estimate $||\nabla\theta_{2}||$ in order to estimate $||\nabla(z-z_{h})||$.

Taking $\phi_{h}=\theta_{2}\in V_{h}^{0}$ in (\ref{du25}) yields to
\begin{equation*}
(\nabla\eta_{2}+\nabla\theta_{2},\nabla\theta_{2})-(y-y_{h},\theta_{2})=0,
\end{equation*}
which, together with the orthogonal relation $(\nabla\eta_{2},\nabla\theta_{2})=0$, results in,
\begin{equation}\label{du40}
||\nabla\theta_{2}||^{2}=(y-y_{h},\theta_{2})\leq||y-y_{h}||||\theta_{2}||.
\end{equation}
Applying the Poincar\'{e} inequality, we obtain from (\ref{du40})
\begin{equation}\label{du42}
||\nabla\theta_{2}||\lesssim||y-y_{h}||.
\end{equation}

Combing (\ref{du24}) with (\ref{du42}), we obtain
\begin{equation}\label{du43}
||\nabla(z-z_{h})||\lesssim h^{k}|z|_{k+1}+||y-y_{h}||,
\end{equation}
which, together with (\ref{du22}) and (\ref{du53b}), respectively, results in the desired estimates (\ref{du39}) and (\ref{du53c}).
\end{proof}

\begin{remark}
Lemma \ref{Febsaa} suggests that the $H^{2}$ regularity for the state cannot be reached on polygonal$/$polyhedral domain. This makes these
estimates restricted to the case of $k=1$. However, the $H^{k}$ regularity for the state can be reached for domains with sufficiently smooth
boundary. Since the Dirichlet boundary control problem is completely different from the Dirichlet boundary value problem, it is non-trivial to
generalize analytical technique for high order element (including isoparametric-equivalent element) for the Dirichlet boundary value problem
to the Dirichlet boundary control problem. Here are two remedies in two dimensional case for the sake of simplicity.

In first case, let $\Omega\subset\mathbb{R}^{2}$ be a bounded domain with smooth boundary and $\mathcal{T}_{h}$ be a ``triangulation"
of $\Omega$, where each triangle at the boundary has at most one curved side. The finite element spaces $V_{h}$ and $V_{h}^{0}$ are defined by
\begin{equation*}
V_{h}=\Big\{v\in C(\bar{\Omega}): v|_{K}\in P_{k}(K), \forall {K\in\mathcal{T}_{h}}\Big\}\ \ {\rm and}\ \ V_{h}^{0}=V_{h}\cap H_{0}^{1}(\Omega){\rm, respectively.}
\end{equation*}
By using standard interpolation error estimates, we can easily verify that the properties of the Ritz projection on $V_{h}$ (and $V_{h}^{0}$) are still true. Assume that the ``triangulation" $\mathcal{T}_{h}$ guarantees Lemma \ref{Dec13}. Indeed, this is easily realised by assuming
that there exists $\rho>0$ such that for each triangle $T\in\mathcal{T}_{h}$ one can find two concentric circular discs $D_{1}$ and $D_{2}$ such that
\begin{equation*}
D_{1}\subseteq T\subseteq D_{2}\ \ \ {\rm and}\ \ \ \displaystyle\frac{{\rm diam}D_{2}}{{\rm diam}D_{1}}\leq\rho.
\end{equation*}
Since $\partial\Omega$ is smooth, for h small enough, we have $h_{e}<2{\rm diam}T<2{\rm diam}D_{2}$ (curved side $e\subset\partial T$, $h_{e}$ denote the arc length of $e$). This indicates Lemma \ref{Dec13} is still valid. Therefore, the results of Theorems \ref{du21}-\ref{du38} are applicable to high order curved-triangle Lagrange element.  

In the second case, recall that we have a polyhedral approximation, $\Omega_{h}$ to $\Omega$, and an isoparametric mapping $F^{h}$ such that
$F^{h}(\Omega_{h})$ closely approximates to $\Omega$, and denote $\tilde{V}_{h}$ a base finite element space defined on $\Omega_{h}$, the resulting space,
\begin{equation*}
V_{h} :=\big\{v((F^{h})^{-1}({\bf x})) : {\bf x}\in F^{h}(\Omega_{h}),\ v\in \tilde{V}_{h}\big\},
\end{equation*}
is an isoparametric-equivalent finite element space (we refer to \cite{Ciarlet} on details). Let $V_{h}^{0}=V_{h}\cap H_{0}^{1}(F^{h}(\Omega_{h}))$. If we impose the control rule on
$\partial(F^{h}(\Omega_{h}))$, i.e., $\displaystyle\frac{1}{\gamma}\frac{\partial z}{\partial{\bf n}}=u$ on $\partial(F^{h}(\Omega_{h}))$, this
shows that we are considering the problem (\ref{du1})-(\ref{du3}) on the domain $F^{h}(\Omega_{h})$. The only difference is that we substitute 
the domain $\Omega$ in the precious context with $F^{h}(\Omega_{h})$. Since the corresponding Ritz 
projection $R_{h}: H^{1}(F^{h}(\Omega_{h}))\rightarrow V_{h}$ ($R_{h}^{0}: H_{0}^{1}(F^{h}(\Omega_{h}))\rightarrow V_{h}^{0}$) still possesses the same approximation properties as (\ref{Nov2})((\ref{du34})), and since the result of Lemma \ref{Dec13} can be achieved by the similar proof. Therefore, by repeating the proof of Theorem \ref{du21}, we can obtain the following estimate
\begin{equation*}
||y-y_{h}||_{0,F^{h}(\Omega_{h})}+||\gamma^{1/2}(y-y_{h})||_{0,\partial(F^{h}(\Omega_{h}))}\leq 
C_{\gamma}h^{k-\frac{1}{2}}(|y|_{k,F^{h}(\Omega_{h})}+|z|_{k+1,F^{h}(\Omega_{h})})
\end{equation*}
under the assumption that $y\in H^{k}(F^{h}(\Omega_{h})), z\in H^{k+1}(F^{h}(\Omega_{h}))$.

Furthermore, we will assume there is auxiliary mapping $F: \Omega_{h}\rightarrow\Omega$ and that $F_{i}^{h}=I^{h}F_{i}$ for each component of 
the mapping. Here $I^{h}v$ denotes the isoparametric interpolation by $I^{h}v(F^{h}({\bf x}))=\tilde{I}^{h}\tilde{v}({\bf x})$ for all ${\bf x}\in\Omega_{h}$ where $\tilde{v}({\bf x})=v(F^{h}({\bf x}))$ for all ${\bf x}\in\Omega_{h}$ and $\tilde{I}_{h}$ is the global interpolation for the base finite element space, $\tilde{V}_{h}$ (we refer to \cite{Ciarlet} on details). Thus, the 
mapping $\Phi^{h}: \Omega\rightarrow F^{h}(\Omega_{h})$ defined by $\Phi^{h}({\bf x})=F^{h}(F^{-1}({\bf x}))$, suggests that $y$ regarded as a function in $F^{h}(\Omega_{h})$ possesses the same regularity as $J_{\Phi^{h}}^{-1}$ (inverse matrix of the Jacobian $J_{\Phi^{h}}$)  when $y$ is smooth enough in the domain $\Omega$,  this can easily be observed by the chain rule. Therefore, the key is the regularity of the inverse mapping $F^{-1}$ , because the regularity of $F^{h}$ may be reached by using isoparametric interpolation operator of high order.  Unfortunately, in mapping a polyhedral domain to a smooth domain, a $C^{1}$ mapping is inappropriate. However, since $F^{h}(\Omega_{h})$ closely approximates to $\Omega$, and $\partial(F^{h}(\Omega_{h}))$ consists of curved sides,  it is certain that $y$ regarded as a function in $F^{h}(\Omega_{h})$ has higher regularity than $y$ regarded as a function in $\Omega_{h}$ . This shows that the regularity of $y$ in the domain $F^{h}(\Omega_{h})$ can be reached asymptotically. Of course, the construction of  such a mapping $F$ is non-trivial , but it is done in \cite{Lenoir}.
\end{remark}

It is well known that the $L^{2}$ norm of numerical error is controlled by the $H^{1}$ norm for conforming finite element approximation to the standard Laplacian equation, and that the $L^{2}$ norm of numerical error is of order one higher than the $H^{1}$ norm. The following Theorem \ref{du45} shows that $||y-y_{h}||$ is still controlled by $||y-y_{h}||_{1}$, but isn't of order one higher than $||y-y_{h}||_{1}$. This will be testified by numerical experiments in Section\ref{SECTION6}.

\begin{theorem}\label{du45}
Assume that $(y,z)\in H^{1}(\Omega)\times H_{0}^{1}(\Omega)$ and $(y_{h},z_{h})\in V_{h}\times V_{h}^{0}$ be the solutions to {\em(\ref{du9})-(\ref{du10})} and {\em(\ref{du18})-(\ref{du19})}, respectively. It holds
\begin{equation}\label{du46}
||y-y_{h}||\lesssim||\nabla(y-y_{h}||.
\end{equation}
\end{theorem}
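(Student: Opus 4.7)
The plan is to invoke a standard Aubin--Nitsche type duality argument, but adapted to the fact that $y-y_h$ does \emph{not} vanish on $\Gamma$. Let $\phi\in H_0^1(\Omega)$ solve the auxiliary problem $-\Delta\phi=y-y_h$ in $\Omega$ with $\phi|_\Gamma=0$; since $\Omega$ is (tacitly) convex, $\phi\in H^2(\Omega)$ with $\|\phi\|_2\lesssim\|y-y_h\|$ and hence $\|\partial_n\phi\|_{0,\Gamma}\lesssim\|y-y_h\|$. Green's identity then gives
\begin{equation*}
\|y-y_h\|^2=(\nabla(y-y_h),\nabla\phi)-(y-y_h,\partial_n\phi)_\Gamma.
\end{equation*}
Using the Galerkin orthogonality (\ref{du27}) with $\psi_h=R_h^0\phi\in V_h^0$ together with (\ref{du34}), the first term is controlled by $h\|\nabla(y-y_h)\|\|y-y_h\|$, while the second is controlled by $\|y-y_h\|_{0,\Gamma}\|y-y_h\|$. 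Dividing through by $\|y-y_h\|$ yields the intermediate inequality
\begin{equation*}
\|y-y_h\|\lesssim h\|\nabla(y-y_h)\|+\|y-y_h\|_{0,\Gamma}.
\end{equation*}

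The remaining task is to show $\|y-y_h\|_{0,\Gamma}\lesssim\|\nabla(y-y_h)\|$. My plan is to extract a global integral identity: testing (\ref{du10}) and (\ref{du19}) with the constant function $\phi=\phi_h=1\in V_h$ and subtracting produces
\begin{equation*}
\gamma\int_\Gamma(y-y_h)\,ds+\int_\Omega(y-y_h)\,d\mathbf{x}=0.
\end{equation*}
Setting $m:=|\Omega|^{-1}\int_\Omega(y-y_h)$ and decomposing $y-y_h=w+m$ with $\int_\Omega w=0$, the Poincar\'{e}--Wirtinger inequality gives $\|w\|\lesssim\|\nabla w\|=\|\nabla(y-y_h)\|$, and the standard trace inequality together with $\|w\|\lesssim\|\nabla w\|$ then yields $\|w\|_{0,\Gamma}\lesssim\|\nabla(y-y_h)\|$. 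The above identity shows $|m|\lesssim\gamma\|y-y_h\|_{0,\Gamma}$, so combining $\|y-y_h\|_{0,\Gamma}^2\lesssim\|w\|_{0,\Gamma}^2+m^2|\Gamma|$ with a Young-type absorption (with the implicit constant allowed to depend on $\gamma$, $|\Omega|$, $|\Gamma|$) delivers $\|y-y_h\|_{0,\Gamma}\lesssim\|\nabla(y-y_h)\|$. Substituting back into the intermediate inequality finishes the proof.

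The main obstacle I expect is precisely the step $\|y-y_h\|_{0,\Gamma}\lesssim\|\nabla(y-y_h)\|$: the multiplicative trace inequality $\|v\|_{0,\Gamma}^2\lesssim\|v\|\|v\|_1$ applied directly to $v=y-y_h$ reintroduces $\|y-y_h\|$ on the right and risks circularity. The device for breaking this circularity is the mean-value identity forced by the constant test function, which pins down the constant component of $y-y_h$ through the boundary mean. This also clarifies why the result is sharp: the boundary contribution prevents the extra power of $h$ that the classical Aubin--Nitsche technique usually produces, consistent with the suboptimal $h^{k-1/2}$ rates in Theorems~\ref{du21} and \ref{du38}, as emphasised in the remark preceding the theorem.
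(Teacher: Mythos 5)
Your argument is correct in outline but takes a genuinely different route from the paper. The paper does not use the Dirichlet dual problem at all: it introduces the \emph{Neumann} auxiliary problem $-\triangle w=y-y_{h}$ in $\Omega$, $\partial w/\partial{\bf n}=\gamma(y-y_{h})$ on $\Gamma$, verifies its compatibility condition with the same constant-test-function identity $\gamma\int_{\Gamma}(y-y_{h})\,ds+\int_{\Omega}(y-y_{h})\,d{\bf x}=0$ that you derive, and then tests the continuous weak form with $\psi=y$ and $\psi=y_{h}$ to obtain the single identity $\|\gamma^{1/2}(y-y_{h})\|_{0,\Gamma}^{2}+\|y-y_{h}\|^{2}=(\nabla w,\nabla(y-y_{h}))$. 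Cauchy--Schwarz plus the $H^{1}$ stability bound $\|\nabla w\|\lesssim\|y-y_{h}\|+\gamma\|y-y_{h}\|_{0,\Gamma}$ finishes in one stroke. That route needs no convexity, no $H^{2}$ regularity, and no Galerkin orthogonality for the state equation, and it controls the boundary error simultaneously. Your route buys something else: by isolating the interior term as $h\|\nabla(y-y_{h})\|\,\|y-y_{h}\|$ via Aubin--Nitsche, it makes explicit that the only obstruction to the classical $O(h)$ gain is the boundary trace term, which is a sharper structural statement consistent with the remark preceding the theorem.

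There is one slip in your final step. From $m|\Omega|=-\gamma\int_{\Gamma}(y-y_{h})\,ds$ you get $|m|\le\gamma|\Gamma|^{1/2}|\Omega|^{-1}\|y-y_{h}\|_{0,\Gamma}$, and the proposed absorption of $m^{2}|\Gamma|$ into $\|y-y_{h}\|_{0,\Gamma}^{2}$ then requires $2\gamma^{2}|\Gamma|^{2}/|\Omega|^{2}<1$; allowing the constant to depend on $\gamma$ does not rescue an absorption whose coefficient must be strictly below one, so as written this fails for large $\gamma$. The fix is to substitute the decomposition into the boundary integral as well: $m(\gamma|\Gamma|+|\Omega|)=-\gamma\int_{\Gamma}w\,ds$, whence $|m|\,|\Gamma|^{1/2}\le\|w\|_{0,\Gamma}$ unconditionally, and $\|y-y_{h}\|_{0,\Gamma}\le 2\|w\|_{0,\Gamma}\lesssim\|\nabla(y-y_{h})\|$ follows without any smallness assumption. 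With that repair your proof is complete (under the convexity hypothesis you tacitly invoke for the $H^{2}$ bound, which the paper's own argument avoids).
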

\begin{proof}
Consider the following Neumann boundary-value problem
\begin{equation}\label{Nov11}
    \left\{
    \begin{array}{ll}
        -\triangle w=y({\bf x})-y_{h}({\bf x}) & \mbox{in}~\Omega,\\
        \displaystyle\frac{\partial w}{\partial{\bf n}}=\gamma(y({\bf x})-y_{h}({\bf x}))|_{\Gamma} & \mbox{on}~\Gamma.
     \end{array}
     \right.
\end{equation}
The continuous weak formulation for the problem (\ref{Nov11}) reads: Find $w\in H^{1}(\Omega)$ such that
\begin{equation}\label{Nov11b}
(\nabla w,\nabla\psi)=(\gamma(y-y_{h}),\psi)_{\Gamma}+(y-y_{h},\psi)\ \ \ \forall\ \psi\in H^{1}(\Omega).
\end{equation}

We get the following orthogonality from a combination of (\ref{du10}) and (\ref{du19})
\begin{equation*}
(\nabla(z-z_{h}),\nabla v_{h})-(\gamma(y-y_{h}),v_{h})_{\Gamma}-(y-y_{h},v_{h})=0,\ \ \forall\ v_{h}\in V_{h}.
\end{equation*}
Owing to $v_{h}=1\in V_{h}$, the above identity implies that
\begin{equation*}
\displaystyle\int_{\Omega}(y({\bf x})-y_{h}({\bf x}))d{\bf x}+\int_{\Gamma}\gamma(y({\bf x})-y_{h}({\bf x}))ds=0.
\end{equation*}
This shows that the problem (\ref{Nov11}) satisfies the consistent condition. Therefore, the weak formulation (\ref{Nov11b}) has a unique solution in the sense that the solutions differ by a constant, and satisfies the following estimate
\begin{equation}\label{Nov12}
||\nabla w|| \lesssim ||y-y_{h}||+\gamma||y-y_{h}||_{-1/2,\Gamma}.
\end{equation}

Taking $\psi=y$ and $\psi=y_{h}$, respectively, in (\ref{Nov11b}) yields to
\begin{equation}\label{Nov11c}
(\nabla w,\nabla y)=(\gamma(y-y_{h}),y)_{\Gamma}+(y-y_{h},y)
\end{equation}
and
\begin{equation}\label{Nov11d}
(\nabla w,\nabla y_{h})=(\gamma(y-y_{h}),y_{h})_{\Gamma}+(y-y_{h},y_{h}).
\end{equation}
A combination of (\ref{Nov11c}) and (\ref{Nov11d}) leads to
\begin{equation}\label{Nov11e}
||\gamma^{1/2}(y-y_{h})||_{0,\Gamma}^{2}+||y-y_{h}||^{2}=(\nabla w,\nabla(y-y_{h}))\leq\|\nabla w\|\,\|\nabla(y-y_{h})\|.
\end{equation}

We obtain from (\ref{Nov12})
\begin{equation}\label{Nov13}
||\nabla w||\lesssim||y-y_{h}||+\gamma||y-y_{h}||_{0,\Gamma}.
\end{equation}
A combination (\ref{Nov11e}) and (\ref{Nov13}) yields to
\begin{equation*}
||\gamma^{1/2}(y-y_{h})||_{0,\Gamma}^{2}+||y-y_{h}||^{2}\lesssim(||y-y_{h}||+\gamma||y-y_{h}||_{0,\Gamma})||\nabla(y-y_{h})||,
\end{equation*}
which, results in
\begin{equation*}
||y-y_{h}||\leq\big(||\gamma^{1/2}(y-y_{h})||_{0,\Gamma}^{2}+||y-y_{h}||^{2}\big)^{1/2}\lesssim ||\nabla(y-y_{h})||.
\end{equation*}
we complete the proof of (\ref{du46}).
\end{proof}

\begin{remark}
In terms of the proof of Theorem \ref{du45}, for a function $v\in H^{1}(\Omega)$ satisfying
\begin{equation*}
\displaystyle\int_{\Omega} v d{\bf x}+\int_{\Gamma} v ds=0,
\end{equation*}
it holds an analogue of the Poincar\'{e} inequality
\begin{equation*}
||v||_{1}\lesssim||\nabla v||.
\end{equation*}
\end{remark}

\section{Stability for discrete solution}\label{SECTION6A}
Since the control is firstly concerned in practice for the optimal control problem, this section specially devotes to an analysis of the stability for the control in the sense that the restriction of the discrete state $y_{h}$ on the boundary is an approximation of the control $u$. To this end, let $V_{h}^{\partial}$ be the trace space corresponding to $V_{h}$, i.e., $V_{h}^{\partial}=V_{h}|_{\Gamma}$. Recall the following ``inverse estimate" for finite element
functions $\chi_{h}\in V_{h}^{\partial}$:
\begin{equation}\label{Feb1}
|\chi_{h}|_{H^{1/2}(\Gamma)}\lesssim h^{-1/2}||\chi_{h}||_{L^{2}(\Gamma)}.
\end{equation}
Indeed, this can be found in \cite{May2013} or be proven by combining estimates in \cite{Ciarlet,Brenner1994} with standard results from interpolation theory. We define the $L^{2}$ projection $P_{h}^{\partial}: L^{2}(\Gamma)\rightarrow V_{h}^{\partial}$ by
\begin{equation*}
(q-P_{h}^{\partial}q,\chi_{h})=0,\ \ \forall\chi_{h}\in V_{h}^{\partial}.
\end{equation*}
By standard results for finite element elements we have the error estimate (see \cite{Ciarlet,Brenner1994,Casa2006})
\begin{equation}\label{Feb2}
||q-P_{h}^{\partial}q||_{0,\Gamma}+h^{1/2}|P_{h}^{\partial}q|_{1/2,\Gamma}\lesssim h^{1/2}|q|_{1/2,\Gamma},\ \ \forall q\in H^{1/2}(\Gamma).
\end{equation}

\begin{theorem}\label{Feb2a}
Assume that $f\in H^{-1}(\Omega),y_{d}\in L^{2}(\Omega)$, the domain $\Omega$ is convex, and its boundary $\Gamma$ is Lipschitz continuous.
There exists a positive constant $C_{\gamma}$ depending on $\gamma$ such that
\begin{equation}\label{Feb3}
||\gamma^{1/2}y_{h}||_{0,\Gamma}+||y_{h}||\leq C_{\gamma}\left(||f||_{-1}+||y_{d}||\right).
\end{equation}
\end{theorem}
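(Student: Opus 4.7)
The plan is to mirror, at the discrete level, the energy-style argument used for the continuous stability estimate (\ref{dudu89}), exploiting the fact that $z_{h}\in V_{h}^{0}$ is an admissible test function in both discrete equations while $y_{h}\in V_{h}$ is admissible only in (\ref{du19}). The forward calculation will couple the two discrete equations so that the $(\nabla z_{h},\nabla y_{h})$ term produced by testing (\ref{du19}) with $y_{h}$ can be replaced via (\ref{du18}) by a single duality pairing with $f$.

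First I would choose $\phi_{h}=z_{h}\in V_{h}^{0}\subset V_{h}$ in (\ref{du19}). Since $z_{h}$ vanishes on $\Gamma$, the boundary term $(\gamma y_{h},z_{h})_{\Gamma}$ disappears and one obtains $\|\nabla z_{h}\|^{2}=(y_{h}-y_{d},z_{h})$. Applying Cauchy--Schwarz together with the Poincaré inequality for $H_{0}^{1}(\Omega)$ then yields the discrete analogue of (\ref{dudu91}),
\begin{equation*}
\|z_{h}\|_{1}\lesssim \|\nabla z_{h}\|\lesssim \|y_{h}\|+\|y_{d}\|.
\end{equation*}

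Next I would test (\ref{du19}) with $\phi_{h}=y_{h}\in V_{h}$ to obtain
\begin{equation*}
(\nabla z_{h},\nabla y_{h})=\gamma\|y_{h}\|_{0,\Gamma}^{2}+\|y_{h}\|^{2}-(y_{d},y_{h}),
\end{equation*}
and simultaneously test (\ref{du18}) with the admissible choice $\psi_{h}=z_{h}\in V_{h}^{0}$ to obtain $(\nabla y_{h},\nabla z_{h})=(f,z_{h})$. Subtracting the two identities gives
\begin{equation*}
\gamma\|y_{h}\|_{0,\Gamma}^{2}+\|y_{h}\|^{2}=(f,z_{h})+(y_{d},y_{h})\leq \|f\|_{-1}\|z_{h}\|_{1}+\|y_{d}\|\,\|y_{h}\|,
\end{equation*}
into which I substitute the bound on $\|z_{h}\|_{1}$ from the previous step and absorb $\|y_{h}\|^{2}$ into the left-hand side via Young's inequality. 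This produces $\gamma\|y_{h}\|_{0,\Gamma}^{2}+\|y_{h}\|^{2}\leq C_{\gamma}(\|f\|_{-1}^{2}+\|y_{d}\|^{2})$, from which (\ref{Feb3}) follows by taking square roots.

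The only subtlety, and the one I would emphasize, is the asymmetry of the test spaces: $z_{h}\in V_{h}^{0}$ is the lucky fact that simultaneously (i) kills the $(\gamma y_{h},z_{h})_{\Gamma}$ boundary term in (\ref{du19}), (ii) permits Poincaré for $\|z_{h}\|_{1}$, and (iii) makes $z_{h}$ legal in the state equation (\ref{du18}). None of the steps requires a discrete inverse inequality or the regularity hypotheses on $\Omega$; the convexity/Lipschitz assumption enters only in that it is the setting under which the underlying problem and Lemma \ref{Febsaa} are formulated, and will be needed downstream for an $H^{1/2}(\Gamma)$ companion estimate rather than for (\ref{Feb3}) itself.
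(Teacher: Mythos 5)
Your proof is correct, but it takes a genuinely different route from the paper's. Both arguments hinge on the same pivotal identity $\gamma\|y_{h}\|_{0,\Gamma}^{2}+\|y_{h}\|^{2}=(f,z_{h})+(y_{d},y_{h})$, obtained by testing (\ref{du19}) with $y_{h}$ and (\ref{du18}) with $z_{h}$; the divergence is in how $(f,z_{h})$ is then controlled. You bound $\|z_{h}\|_{1}$ intrinsically, by testing (\ref{du19}) with $z_{h}\in V_{h}^{0}$ (so the boundary term vanishes) and applying Poincar\'{e}, which gives the discrete analogue $\|z_{h}\|_{1}\lesssim\|y_{h}\|+\|y_{d}\|$ of (\ref{dudu91}); this is a purely discrete energy argument that mirrors the continuous stability proof of (\ref{dudu89}) and uses no information about the continuous solution. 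The paper instead splits $(f,z_{h})=(f,z_{h}-z)+(f,z)$ and estimates $\|z-z_{h}\|_{1}$ through the a priori error bound (\ref{du53c}) with $k=1$, the $H^{2}$ regularity estimate (\ref{Dec5}), and the continuous stability bounds (\ref{dudu89}) and (\ref{Dec1}) --- which is precisely where the convexity hypothesis enters its proof. Your argument therefore buys a stronger conclusion under weaker assumptions: it needs neither convexity nor any finite element error estimate or elliptic regularity, and the resulting constant is in fact independent of $\gamma$ (the theorem only asks for a $\gamma$-dependent one). The paper's detour through the continuous solution delivers nothing for (\ref{Feb3}) that your direct computation does not; as you correctly anticipate, convexity is genuinely needed only for the companion $H^{1/2}(\Gamma)$ bound (\ref{Feb7}), whose proof does rely on the error estimates.
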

\begin{proof}
Taking $\phi_{h}=y_{h}$ and $\psi_{h}=z_{h}$ in (\ref{du19}) and (\ref{du18}), respectively, gives
\begin{equation}\label{Feb4}
\begin{array}{lll}
||\gamma^{1/2}y_{h}||_{0,\Gamma}^{2}+||y_{h}||^{2}&=&(\nabla z_{h},\nabla y_{h})+(y_{d},y_{h})\vspace{2mm}\\
&=&(f,z_{h})+(y_{d},y_{h})\vspace{2mm}\\
&=&(f,z_{h}-z)+(f,z)+(y_{d},y_{h})\vspace{2mm}\\
&\leq&||f|_{-1}|||z-z_{h}||_{1}+||f||_{-1}||z||_{1}+\|y_{d}\|\,\|y_{h}\|.
\end{array}
\end{equation}
Noticing $z-z_{h}\in H_{0}^{1}(\Omega)$, we obtain from the Poincar\'{e} inequality, (\ref{du53c}) with $k=1$, and (\ref{Dec5})
\begin{equation}\label{Feb5}
\begin{array}{lll}
||z-z_{h}||_{1}&\lesssim&||\nabla(z-z_{h})||\vspace{2mm}\\
&\leq& C_{\gamma}h^{1/2}\left(||y||_{1}+||z||_{2}\right)\vspace{2mm}\\
&\leq& C_{\gamma}h^{1/2}\left(||y||_{1}+||y-y_{d}||\right)\vspace{2mm}\\
&\leq& C_{\gamma}h^{1/2}\left(||y||_{1}+||y_{d}||\right).
\end{array}
\end{equation}

Combining (\ref{Feb4}) with (\ref{Feb5}), together with Young inequality , gives
\begin{equation}\label{Feb6}
||\gamma^{1/2}y_{h}||_{0,\Gamma}^{2}+||y_{h}||^{2}\leq C_{\gamma}\left(||y||_{1}^{2}+||y_{d}||^{2}+||f||_{-1}^{2}+||z||_{1}^{2}\right).
\end{equation}
Applying the stable estimates (\ref{dudu89}) and (\ref{Dec1}) of the state $y$ and adjoint state $z$, respectively, we get
\begin{equation*}
||\gamma^{1/2}y_{h}||_{0,\Gamma}^{2}+||y_{h}||^{2}\leq C_{\gamma}\left(||y_{d}||^{2}+||f||_{-1}^{2}\right),
\end{equation*}
which, results in the desired estimate (\ref{Feb3}).
\end{proof}

\begin{theorem}
Under the assumption of Theorem \ref{Feb2a}, the discrete solutions admit the uniform bound
\begin{equation}\label{Feb7}
|y_{h}|_{1/2,\Gamma}\leq C_{\gamma}\left(||f||_{-1}+||y_{d}||\right).
\end{equation}
\end{theorem}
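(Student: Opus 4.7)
The plan is to split $y_h|_\Gamma$ against the projected continuous trace and use the inverse estimate only where it is cheap. Let $u=y|_\Gamma$ denote the continuous control and introduce the decomposition
\begin{equation*}
y_h|_\Gamma \;=\; \bigl(y_h|_\Gamma - P_h^\partial u\bigr) \;+\; P_h^\partial u,
\end{equation*}
so that by the triangle inequality
\begin{equation*}
|y_h|_{1/2,\Gamma} \;\leq\; \bigl|y_h|_\Gamma - P_h^\partial u\bigr|_{1/2,\Gamma} \;+\; |P_h^\partial u|_{1/2,\Gamma}.
\end{equation*}
The second term is handled directly by the $L^2$-projection bound (\ref{Feb2}), giving $|P_h^\partial u|_{1/2,\Gamma}\lesssim |u|_{1/2,\Gamma}$. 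Since $u=y|_\Gamma$, the trace inequality combined with Theorem~\ref{dudu93} (which uses convexity of $\Omega$, available in our hypotheses) and the stability estimate (\ref{dudu89}) yields $|u|_{1/2,\Gamma}\lesssim \|y\|_1 \leq C_\gamma(\|f\|_{-1}+\|y_d\|)$.

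For the first term, observe that $y_h|_\Gamma - P_h^\partial u \in V_h^\partial$, so the boundary inverse estimate (\ref{Feb1}) applies:
\begin{equation*}
\bigl|y_h|_\Gamma - P_h^\partial u\bigr|_{1/2,\Gamma} \;\lesssim\; h^{-1/2}\bigl\|y_h|_\Gamma - P_h^\partial u\bigr\|_{0,\Gamma} \;\leq\; h^{-1/2}\bigl(\|y-y_h\|_{0,\Gamma}+\|u-P_h^\partial u\|_{0,\Gamma}\bigr).
\end{equation*}
The second summand is absorbed by (\ref{Feb2}): $h^{-1/2}\|u-P_h^\partial u\|_{0,\Gamma}\lesssim |u|_{1/2,\Gamma}$, already bounded above. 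For the first summand, Theorem~\ref{du53a} with $k=1$ delivers $\|y-y_h\|_{0,\Gamma}\leq C_\gamma h^{1/2}(|y|_1+|z|_2)$, so the factor $h^{-1/2}$ is cancelled exactly. The $H^2$ regularity of $z$ is at hand because $\Omega$ is convex, and (\ref{Dec5}) together with (\ref{dudu89}) gives $\|z\|_2\lesssim \|y\|+\|y_d\|\leq C_\gamma(\|f\|_{-1}+\|y_d\|)$, while $|y|_1$ is bounded by Theorem~\ref{dudu93}.

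Collecting the three $h$-independent bounds yields the stated estimate (\ref{Feb7}). The only delicate point is the mesh-dependent factor in the inverse estimate: it must be matched by a boundary $L^2$ error of order $h^{1/2}$, which is precisely what Theorem~\ref{du53a} provides for $k=1$; without this $h^{1/2}$ boundary rate the argument would fail, so the main conceptual step is recognizing that the suboptimal $k-1/2$ rate of Theorem~\ref{du53a} is exactly strong enough to compensate the inverse estimate here.
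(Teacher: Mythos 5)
Your argument is correct and follows essentially the same route as the paper: split $y_h|_\Gamma$ against the $L^2$-projection $P_h^\partial$ of the continuous trace, apply the boundary inverse estimate (\ref{Feb1}) to the discrete difference, control $\|y-y_h\|_{0,\Gamma}$ by the order-$h^{1/2}$ bound of Theorem~\ref{du53a} with $k=1$ so that the $h^{-1/2}$ factor cancels, and close with (\ref{Feb2}), the trace inequality, (\ref{Dec5}), (\ref{dudu89}) and (\ref{Dec1}). The only difference is cosmetic: the paper writes the splitting as three terms $|y_h-P_h^\partial y|_{1/2,\Gamma}+|P_h^\partial y-y|_{1/2,\Gamma}+|y|_{1/2,\Gamma}$ rather than your two, and both reduce to the same estimates.
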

\begin{proof}
From triangle inequality, ``inverse estimate" (\ref{Feb1}), and the property, (\ref{Feb2}), of the $L^{2}$ projection operator $P_{h}^{\partial}$, we get
\begin{equation}\label{Feb8}
\begin{array}{lll}
|y_{h}|_{1/2,\Gamma}&\leq&|y_{h}-P_{h}^{\partial}y|_{1/2,\Gamma}+|P_{h}^{\partial}y-y|_{1/2,\Gamma}+
|y|_{1/2,\Gamma}\vspace{2mm}\\
&\lesssim&h^{-1/2}||y_{h}-P_{h}^{\partial}y||_{0,\Gamma}+|y|_{1/2,\Gamma}\vspace{2mm}\\
&\leq&h^{-1/2}\left(||y_{h}-y||_{0,\Gamma}+||y-P_{h}^{\partial}y||_{0,\Gamma}\right)+|y|_{1/2,\Gamma}\vspace{2mm}\\
&\lesssim&h^{-1/2}||y_{h}-y||_{0,\Gamma}+h^{-1/2}h^{1/2}|y|_{1/2,\Gamma}+|y|_{1/2,\Gamma)}.
\end{array}
\end{equation}

From (\ref{du53b}) with $k=1$ and (\ref{Dec5}), we have
\begin{equation}\label{Feb9}
||y_{h}-y||_{0,\Gamma}\leq C_{\gamma}h^{1/2}\left(||\nabla y||+|z|_{2}\right)\leq C_{\gamma}h^{1/2}\left(||y||_{1}+||y_{d}||\right).
\end{equation}
A combination (\ref{Feb8}) and (\ref{Feb9}) yields to
\begin{equation}\label{Feb10}
|y_{h}|_{1/2,\Gamma}\leq C_{\gamma}\left(||y||_{1}+||y_{d}||+|y|_{1/2,\Gamma}\right)\leq C_{\gamma}\left(||y||_{1}+||y_{d}||\right).
\end{equation}
The desired estimate (\ref{Feb7}) follows from a combination of (\ref{Feb10}), (\ref{dudu89}) and (\ref{Dec1}).
\end{proof}

\section{Numerical experiments}\label{SECTION6}
In this section, we test the performance of finite element approximation to the variational formulation developed in this paper with two model problems. The actual solution of the first model problem is known, and the true solution of the second example is unknown, and two settings of the regularization parameter $\gamma$ will be considered here, We are thus able to study the convergence rate of the state $y$ and adjoint state $z$, as well as the control variable $u$ over quasi-uniform mesh, and to study the relation between the singularity of the actual solution and the regularization parameter in Example two. Note that we shall employ piecewise linear element in both examples. Let $\{\psi_{i}\}$ and $\{\phi_{j}\}$ be respectively the basis of $V_{h}^{0}$ and $V_{h}$, then the algebraic system with respect to (\ref{du18})-(\ref{du19})
has the following form
\begin{equation*}
    \left(\begin{array}{cc} A & O \\ B & C \end{array}\right)
    \left(\begin{array}{c} Y \\ Z \end{array}\right) =
    \left(\begin{array}{c} F \\ G \end{array}\right).
\end{equation*}

\subsection{Example one}
We consider the problem (\ref{du1})-(\ref{du2}) over a unit square $\Omega=(0,1)\times(0,1)$ with
\begin{equation*}
\displaystyle f=-\frac{4}{\gamma}, y_{d}=\left(2+\frac{1}{\gamma}\right)\left(x_{1}^2-x_{1}+x_{2}^2-x_{2}\right).
\end{equation*}
The exact solutions are given by
\begin{equation*}
\displaystyle u=\frac{x_{1}^2-x_{1}+x_{2}^2-x_{2}}{\gamma}, y=\frac{x_{1}^2-x_{1}+x_{2}^2-x_{2}}{\gamma}, z=\left(x_{1}^2-x_{1}\right)\left(x_{2}^2-x_{2}\right).
\end{equation*}
It is easy to verify that the control $u$, state $y$, and adjoint state $z$ satisfy
\begin{equation*}
\displaystyle u=y|_{\Gamma}=\frac{1}{\gamma}\frac{\partial z}{\partial{\bf n}}\Big|_{\Gamma}.
\end{equation*}
Here, we consider two settings, $\gamma=1$ and $\gamma=0.01$, of regularization parameter.

\begin{figure}[t]
    \begin{minipage}[t]{0.5\linewidth}
        \centering
        \includegraphics[width=2.25in]{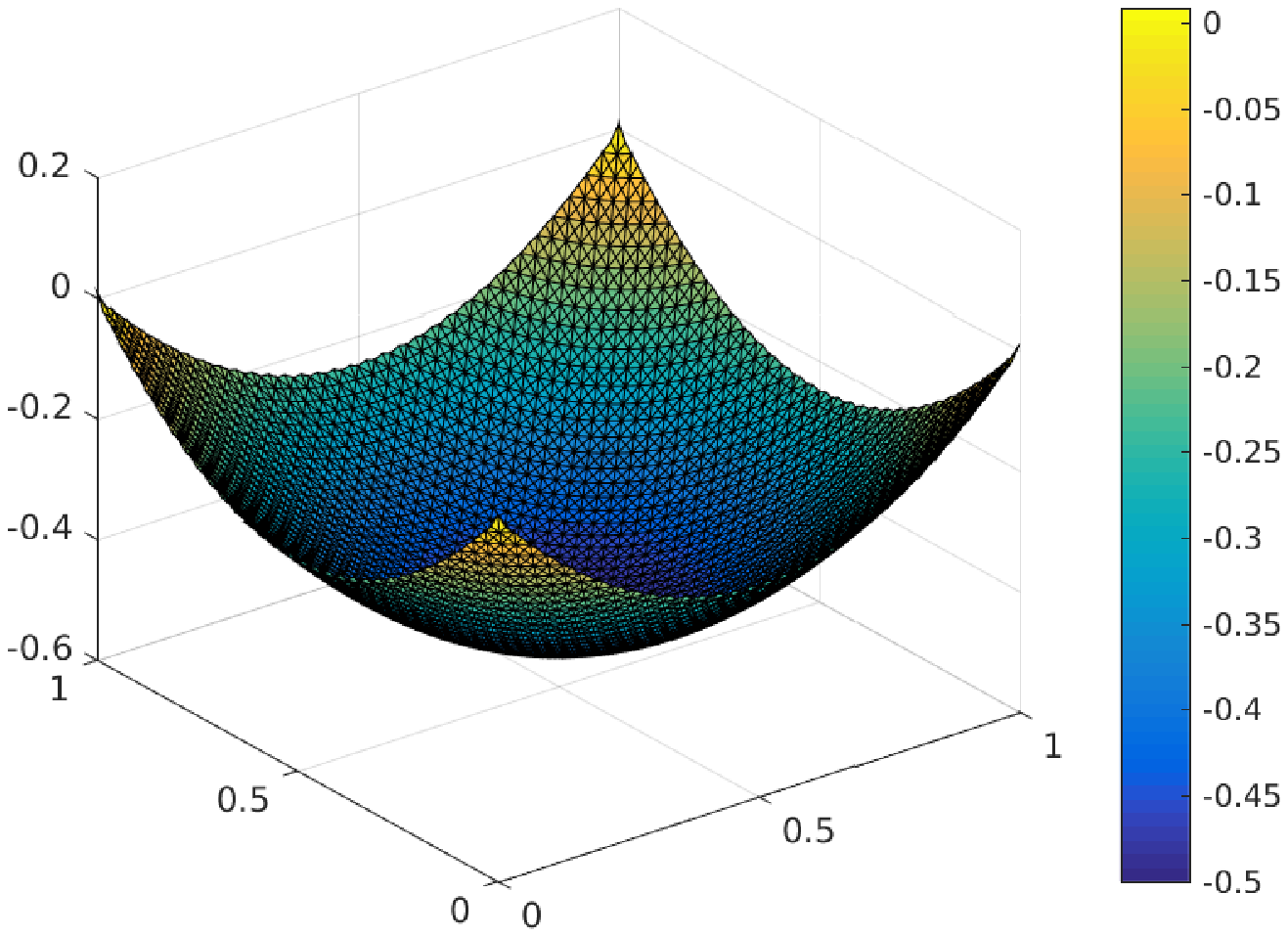}\\
      \end{minipage}
    \begin{minipage}[t]{0.5\linewidth}
        \centering
        \includegraphics[width=2.25in]{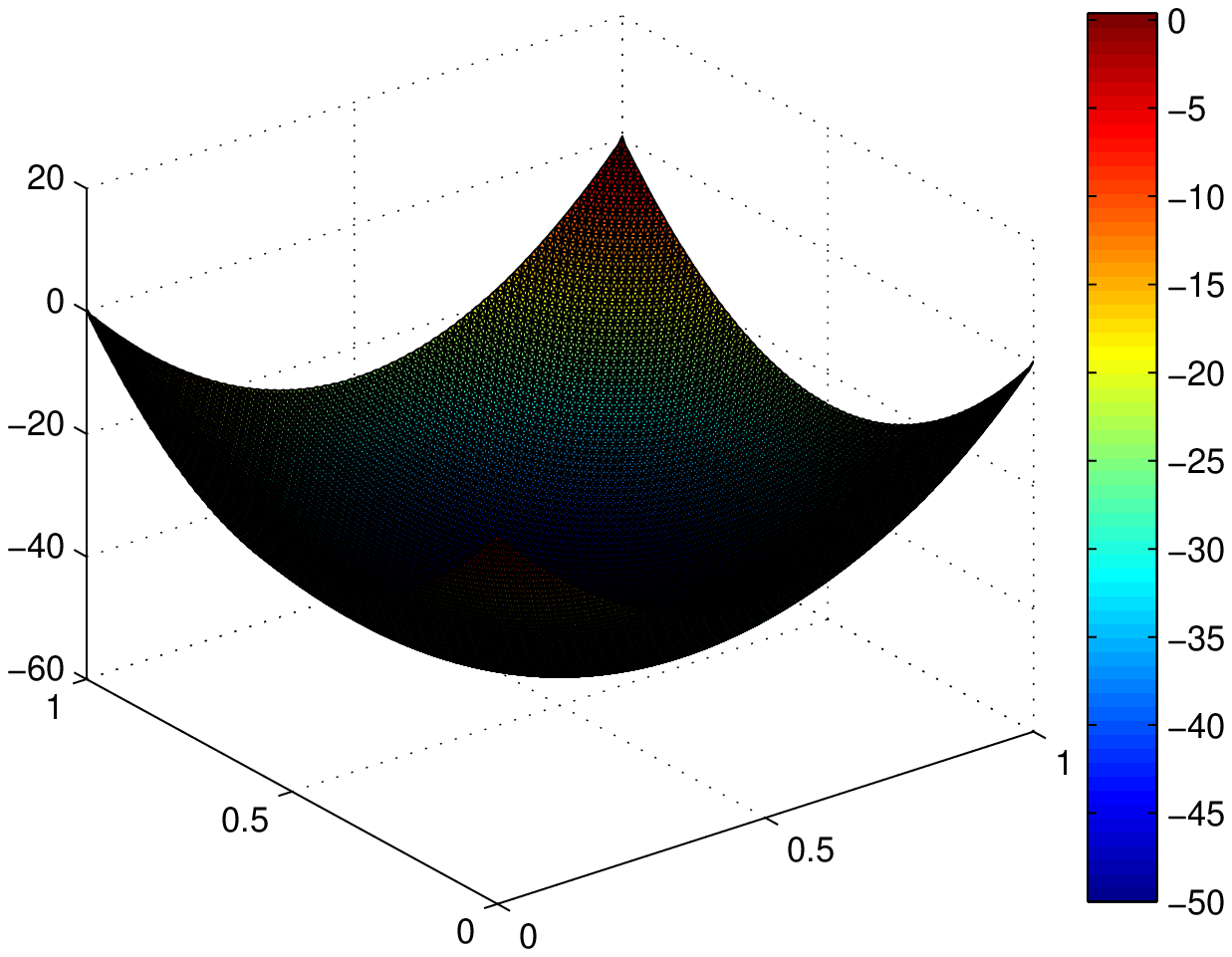}\\
    \end{minipage}
    \caption{Left: regularization parameter $\gamma=1$, an approximation to the
    state variable $y$ over the mesh with 8192 elements generated by uniform refinement of iterations 5. Right: regularization parameter $\gamma=0.01$, an approximation to the
    state variable $y$ over the mesh with 32768 elements generated by uniform refinement of iterations 6. }
    \label{du80}
\end{figure}

\begin{figure}[t]
    \begin{minipage}[t]{0.5\linewidth}
        \centering
        \includegraphics[width=2.25in]{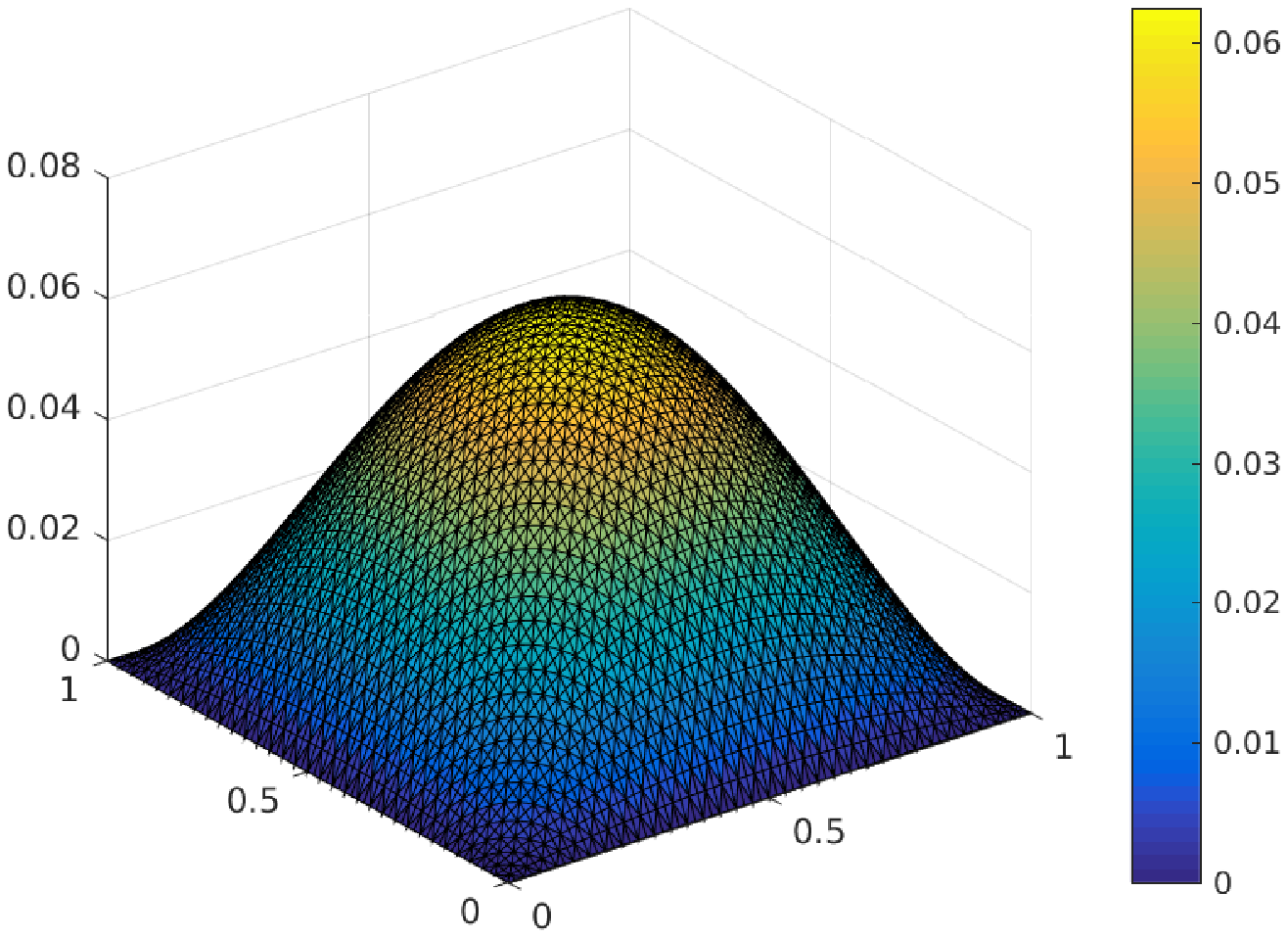}\\
      \end{minipage}
    \begin{minipage}[t]{0.5\linewidth}
        \centering
        \includegraphics[width=2.25in]{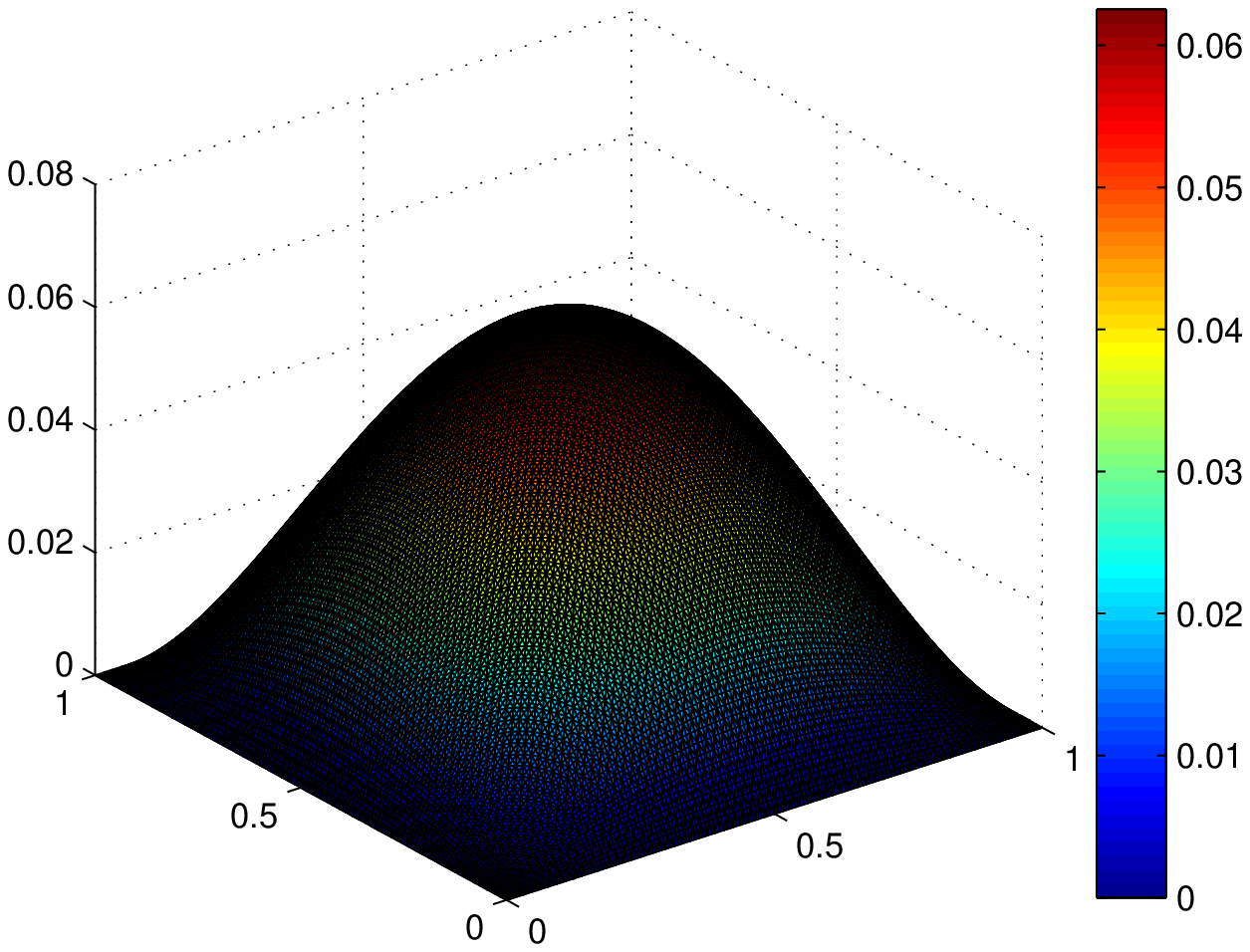}\\
    \end{minipage}
    \caption{An approximation to the adjoint state $z$ over the mesh with 8192 elements for $\gamma=1$ (left) and over the mesh
    with 32768 elements for $\gamma=0.01$ (right).}
    \label{du81}
\end{figure}

\begin{figure}[t]
    \begin{minipage}[t]{0.5\linewidth}
        \centering
        \includegraphics[width=2.25in]{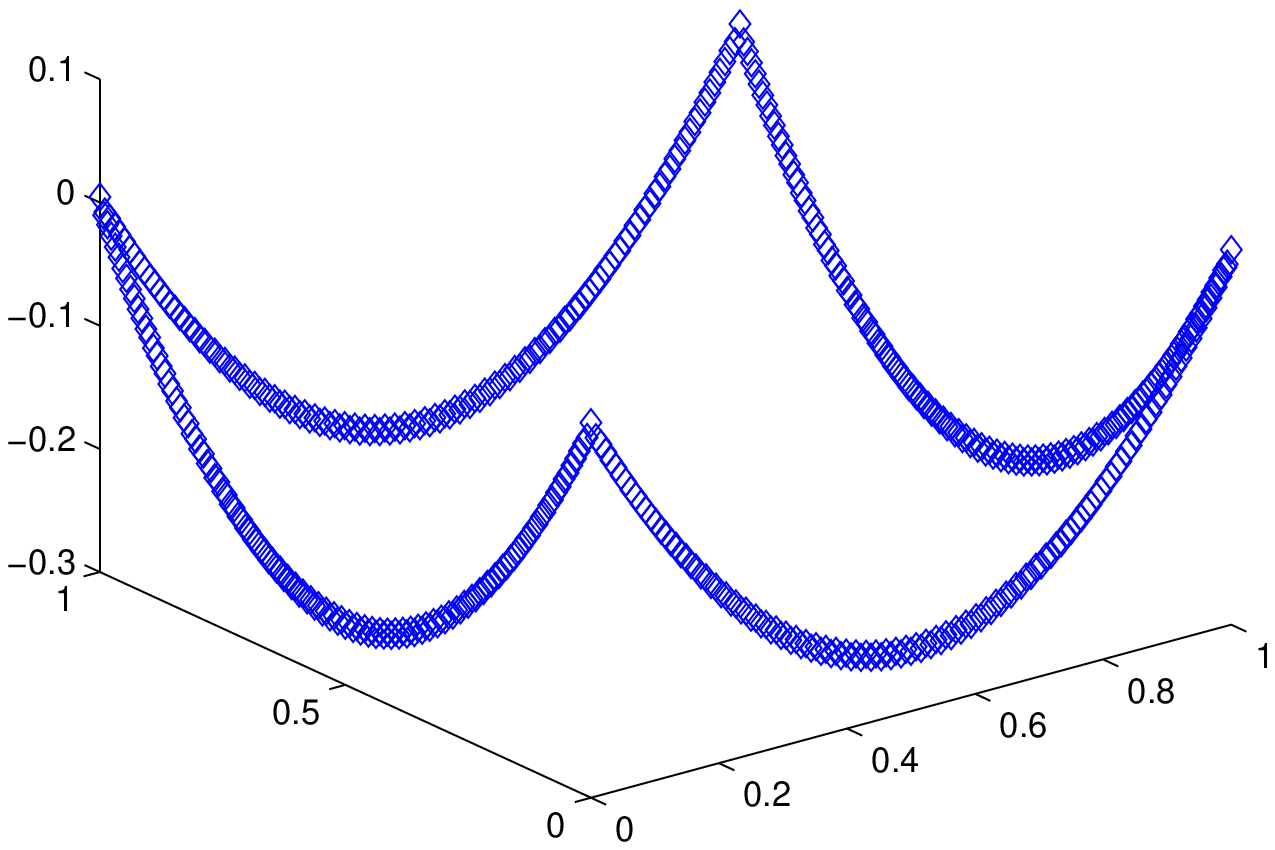}\\
      \end{minipage}
    \begin{minipage}[t]{0.5\linewidth}
        \centering
        \includegraphics[width=2.25in]{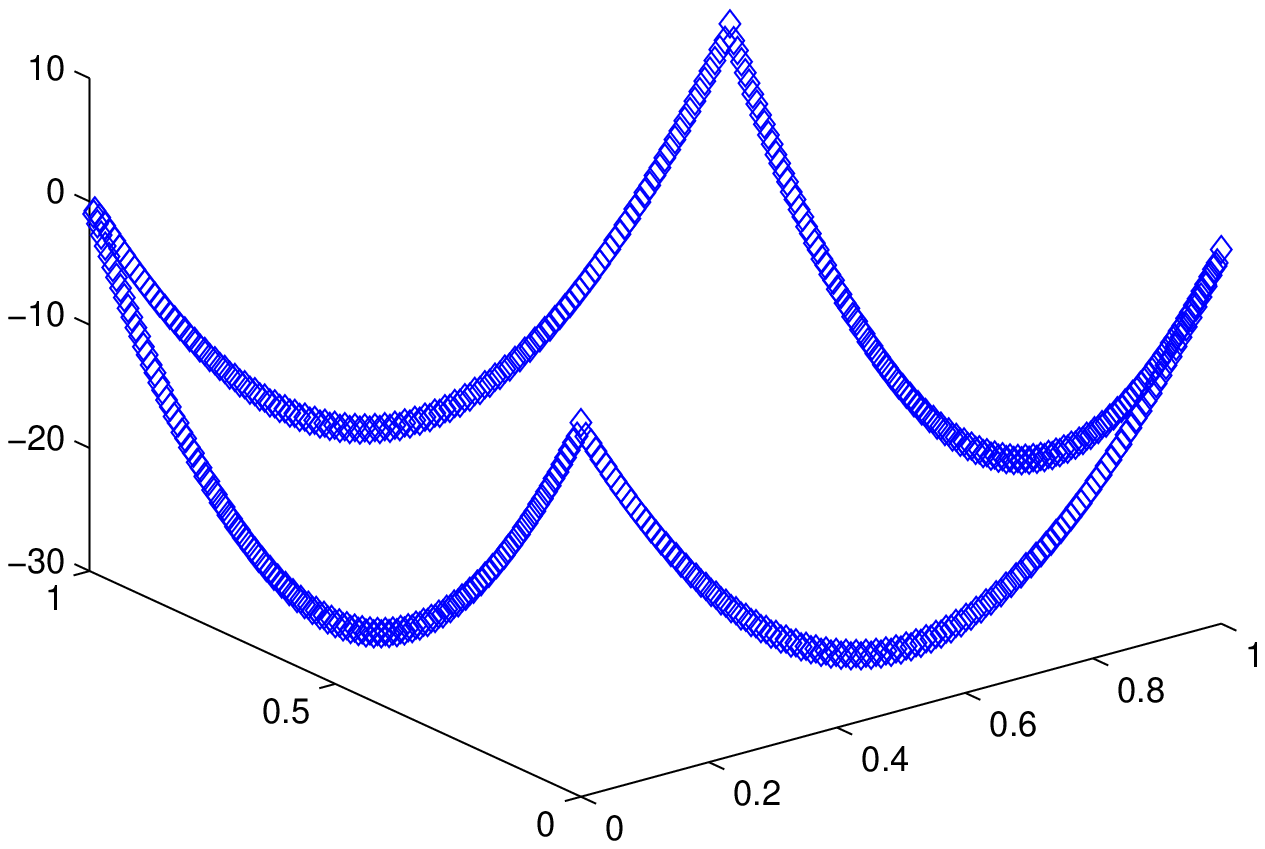}\\
    \end{minipage}
    \caption{An approximation to the
     control variable $u$, i.e., a restriction of $y_{h}$ to the boundary $\Gamma$, over the mesh with 32768 elements generated by uniform refinement of iterations 6 for regularization parameter $\gamma=1$ (left) and $\gamma=0.01$ (right).}
    \label{du48}
\end{figure}

We start with an initial mesh consisting of 8 congruent right triangles. Figure \ref{du80} reports an approximation solution of the state variable $y$ over the mesh with 8192 elements, which
generated by uniform refinement of iterations 5 for regularization parameter $\gamma=1$ (left), and over the mesh with 32768 elements, which
generated by uniform refinement of iterations 6 for regularization parameter $\gamma=0.01$ (right).
In Figure \ref{du81}, we depict the pictures of an approximation solution of the adjoint state $z$ over the mesh with 8192 elements for $\gamma=1$ (left) and over the mesh with 32768 elements for $\gamma=0.01$ (right). Figure \ref{du48} shows an restriction
(which is regarded as an approximation solution of the control variable $u$) of $y_{h}$ on the boundary over the mesh with 32768 elements for  $\gamma=1$ (left) and for $\gamma=0.01$ (right).

Table \ref{du78} shows respectively the exact errors $\|\nabla(y-y_{h})\|,\|\nabla(z-z_{h})\|$ and $\|u-u_{h}\|_{0,\Gamma}$ for
the regularization parameter $\gamma=1$. It is observed that they have the rate of convergence of order one for linear element, which is
order half higher than theoretical results. Table \ref{du79} reports the true errors of the state and adjoint state in $L^{2}$ norm
for $\gamma=1$. It can be seen that $||y-y_{h}||$ has the rate of convergence of order 1.5 at least, and that the speed of convergence of $||z-z_{h}||$ is close to 2. Table \ref{du47} provides the exact errors of $||y-y_{h}||,||\nabla(z-z_{h})||$
and $||u-u_{h}||_{0,\Gamma}$ for the regularization parameter $\gamma=0.01$, and the similar rate of convergence to $\gamma=1$ can be observed.

In addition, comparing Table \ref{du78} with Table \ref{du79}, we can see that the speed of convergence of $||y-y_{h}||$ is order half higher
than $||\nabla(y-y_{h})||$, and that the rate of convergence of $||z-z_{h}||$ is order one higher than $||\nabla(z-z_{h})||$

\begin{table}[t]\small
 \begin{center}
        \caption{Numerical data of $\gamma=1$ for Example 1: $h$ -- maximum size of quasi-uniform mesh; $||\nabla(y-y_{h})||$ -- numerical error for the state variable $y$; order$_{y}$ -- the speed of convergence for $y$; $||\nabla(z-z_{h})||$ -- numerical error for the adjoint state variable $z$; order$_{z}$ -- the speed of convergence for $z$; $||u-u_{h}||_{0,\Gamma}$ -- numerical error for the control variable $u$; order$_{u}$ -- the speed of convergence for $u$.} \label{du78}
        \small 
        \begin{tabular}{|c|c|c|c|c|c|c|} \hline
            $h$& $||\nabla(y-y_{h})||$& ${\rm order}_{y}$& $||\nabla(z-z_{h})||$& ${\rm order}_{z}$& $||u-u_{h}||_{0,\Gamma}$& ${\rm order}_{u}$\\ \hline
            0.7071&0.7187&--&0.1069&--&0.1901&-- \\ \hline
            0.3536&0.3603&0.9964&0.0539&0.9881&0.0663&1.5200\\ \hline
            0.1768&0.1928&0.9021&0.0278&0.9552&0.0345&0.9424\\ \hline
            0.0884&0.0898&1.1023&0.0140&0.9897&0.0154&1.1637\\ \hline
            0.0442&0.0446&1.0097&0.0070&1.000&0.0066&1.2224\\ \hline
        \end{tabular}
    \end{center}
\end{table}

\begin{table}[t]\small
 \begin{center}
        \caption{Numerical data of $\gamma=1$ for Example 1: $h$ -- maximum size of quasi-uniform mesh; $||y-y_{h}||$ -- numerical error for the state variable $y$; ${\rm order}_{y}$ -- the speed of convergence for $y$ in $L^{2}$ norm; $||z-z_{h}||$ -- numerical error for the adjoint state variable $z$; ${\rm order}_{z}$ -- the speed of convergence for $z$ in $L^{2}$ norm.}
        \label{du79}
        \small 
        \begin{tabular}{|c|c|c|c|c|c|c|} \hline
            $h$ &0.7071 & 0.3536& 0.1768& 0.0884& 0.0442 &0.0221\\ \hline
            $||y-y_{h}||$&0.0897&0.0250&0.0078&0.0025&7.86e-004&2.55e-004 \\ \hline
            ${\rm order}_{y}$& --&1.8436&1.6804&1.6415&1.6686&1.6259\\ \hline
            $||z-z_{h}||$&0.0181&0.0054&0.0014&3.55e-004&8.74e-005&2.10e-005\\ \hline
            ${\rm order}_{z}$& --&1.7453&1.9475&1.9775&2.0234&2.0604\\ \hline
       \end{tabular}
 \end{center}
\end{table}

\begin{table}[t]\small
 \begin{center}
        \caption{Numerical data of $\gamma=0.01$ for Example 1: $h$ -- maximum size of quasi-uniform mesh; $||y-y_{h}||$ -- numerical error for the state variable $y$; order$_{y}$ -- the speed of convergence for $y$ in $L^{2}$ norm; $||\nabla(z-z_{h})||$ -- numerical error for the adjoint state variable $z$; order$_{z}$ -- the speed of convergence for $z$; $||u-u_{h}||_{0,\Gamma}$ -- numerical error for the control variable $u$; order$_{u}$ -- the speed of convergence for $u$.} \label{du47}
        \small 
        \begin{tabular}{|c|c|c|c|c|c|c|} \hline
            $h$& $||y-y_{h}||$& ${\rm order}_{y}$& $||\nabla(z-z_{h})||$& ${\rm order}_{z}$& $||u-u_{h}||_{0,\Gamma}$& ${\rm order}_{u}$\\ \hline
            0.7071&2.9637&--&0.1134&--&9.0693&-- \\ \hline
            0.3536&0.7594&1.9649&0.0561&1.0156&2.5525&1.8295\\ \hline
            0.1768&0.2101&1.8538& 0.0279&1.0077&0.8519&1.5832\\ \hline
            0.0884&0.0663&1.6640&0.0140&0.9948&0.3384&1.3320\\ \hline
            0.0442&0.0191&1.7954&0.0070&1.000&0.1187&1.5114\\ \hline
        \end{tabular}
    \end{center}
\end{table}

\subsection{Example two}
We consider a 2D example over a square domain $\Omega=(0,1/4)\times(0,1/4)\subset\mathbb{R}^{2}$. The data is chosen as
\begin{equation*}
f=0, \ \ \ y_{d}=\left(x_{1}^2+x_{2}^2\right)^s,
\end{equation*}
where $s=10^{-5}$. Since we do not have an explicit expression for the exact solution, the ``reference solution" has been calculated
over a fine mesh with 131072 elements. Here, we also consider two settings, $\gamma=1$ and $\gamma=0.01$, of regularization parameter.

\begin{figure}[t]
    \begin{minipage}[t]{0.5\linewidth}
        \centering
        \includegraphics[width=2.25in]{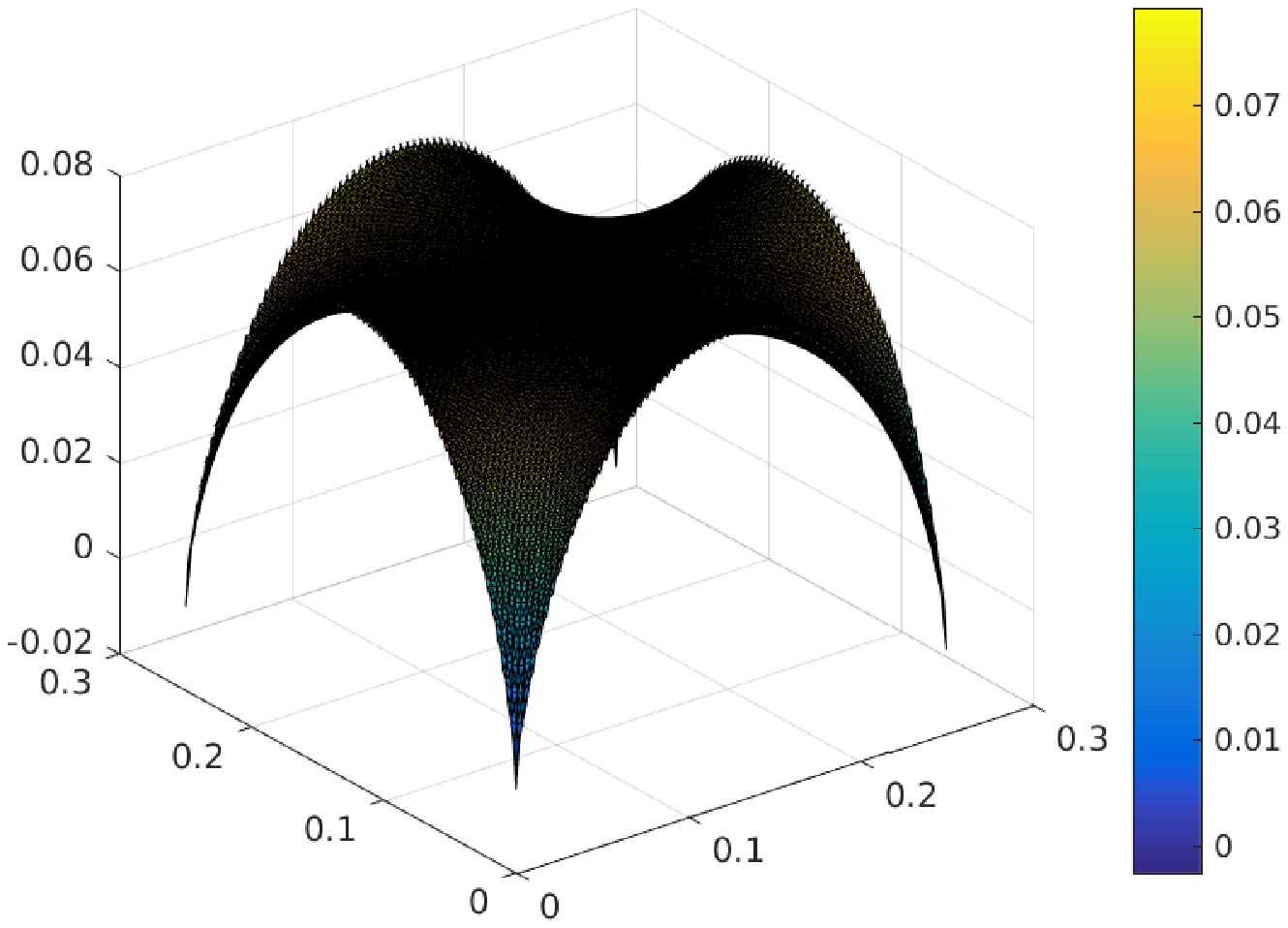}\\
      \end{minipage}
    \begin{minipage}[t]{0.5\linewidth}
        \centering
        \includegraphics[width=2.25in]{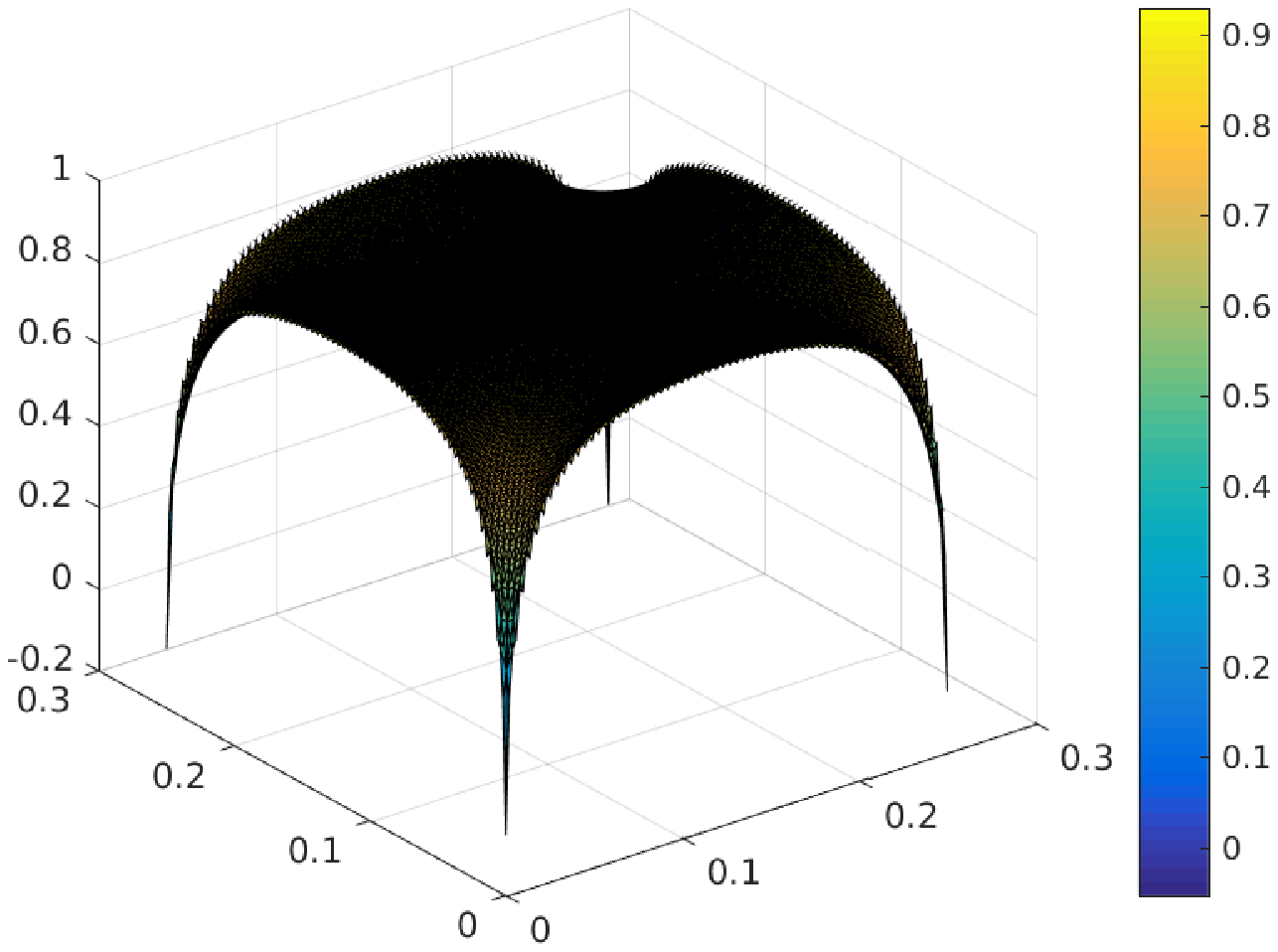}\\
    \end{minipage}
    \caption{An approximation solution to the state variable $y$ over the mesh generated by uniform refinement of iteration 6 (with 32768 elements) for the regularization parameter $\gamma=1$ (left) and $\gamma=0.01$ (right). }
    \label{du49}
\end{figure}
\begin{figure}[t]
    \begin{minipage}[t]{0.5\linewidth}
        \centering
        \includegraphics[width=2.25in]{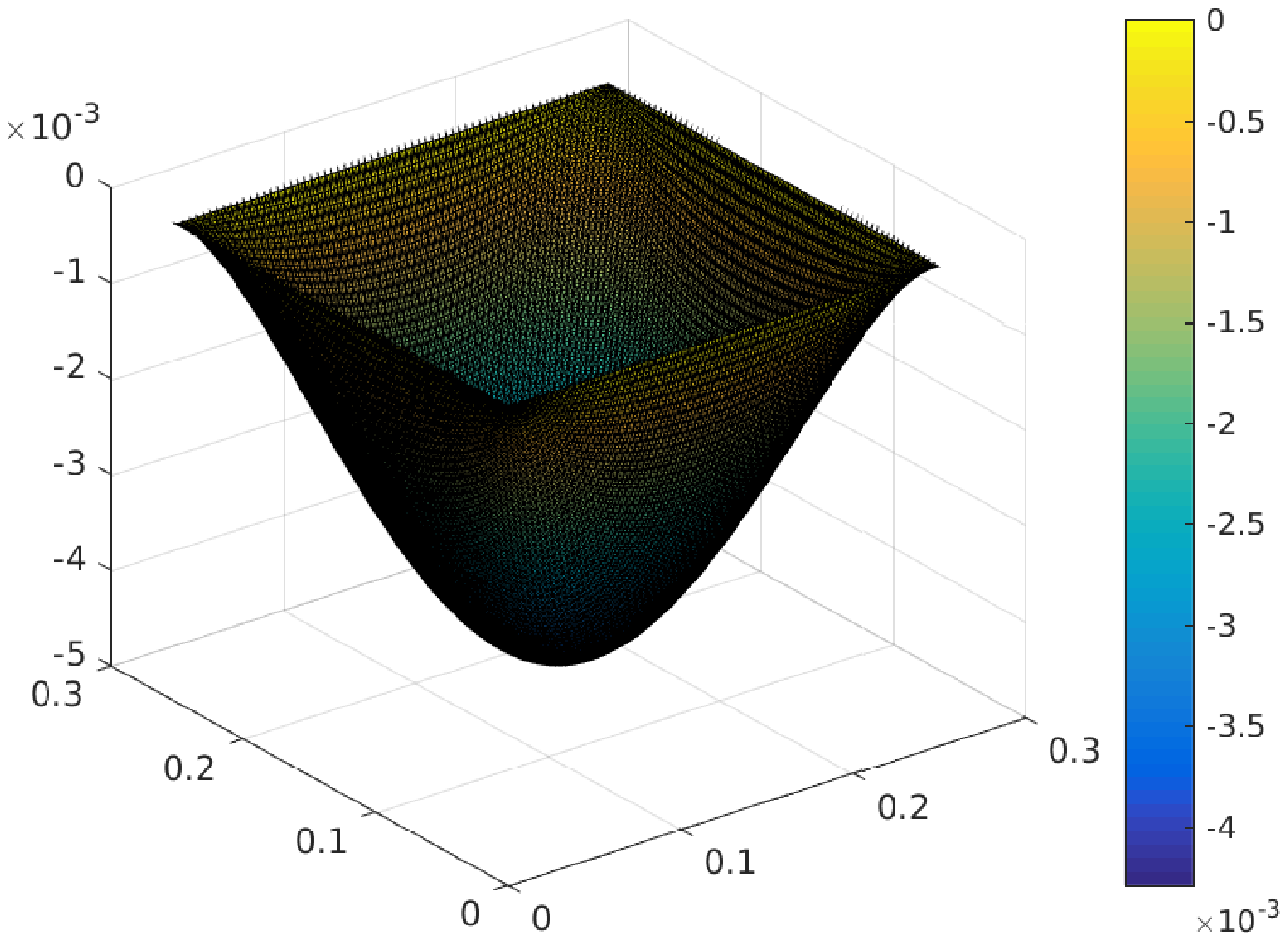}\\
      \end{minipage}
    \begin{minipage}[t]{0.5\linewidth}
        \centering
        \includegraphics[width=2.25in]{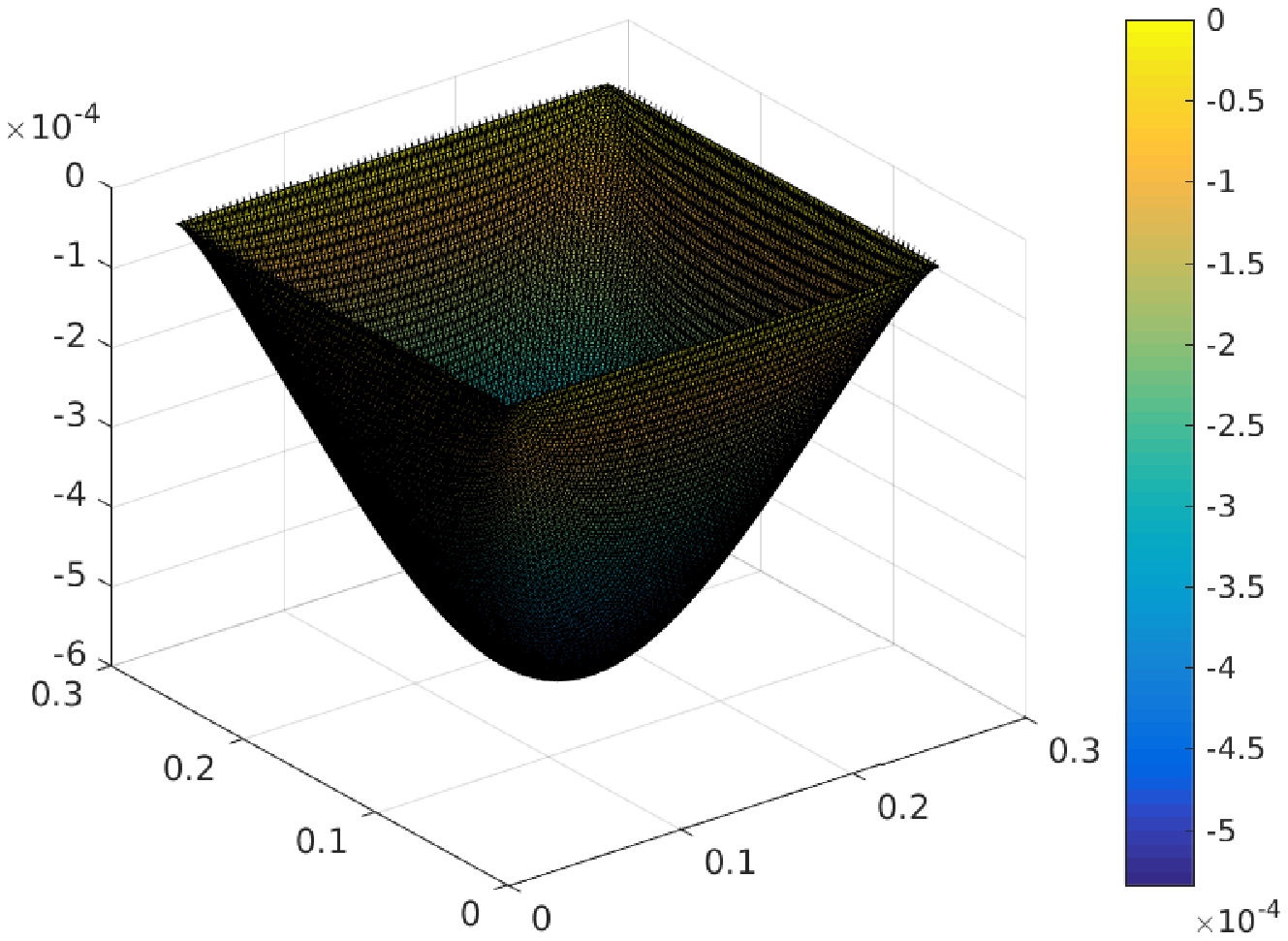}\\
    \end{minipage}
    \caption{An approximation solution to the adjoint state $z$ over the mesh generated by uniform refinement of iteration 6 (with 32768 elements) for the regularization parameter $\gamma=1$ (left) and $\gamma=0.01$ (right).}
    \label{du50}
\end{figure}

\begin{figure}[t]
    \begin{minipage}[t]{0.5\linewidth}
        \centering
        \includegraphics[width=2.25in]{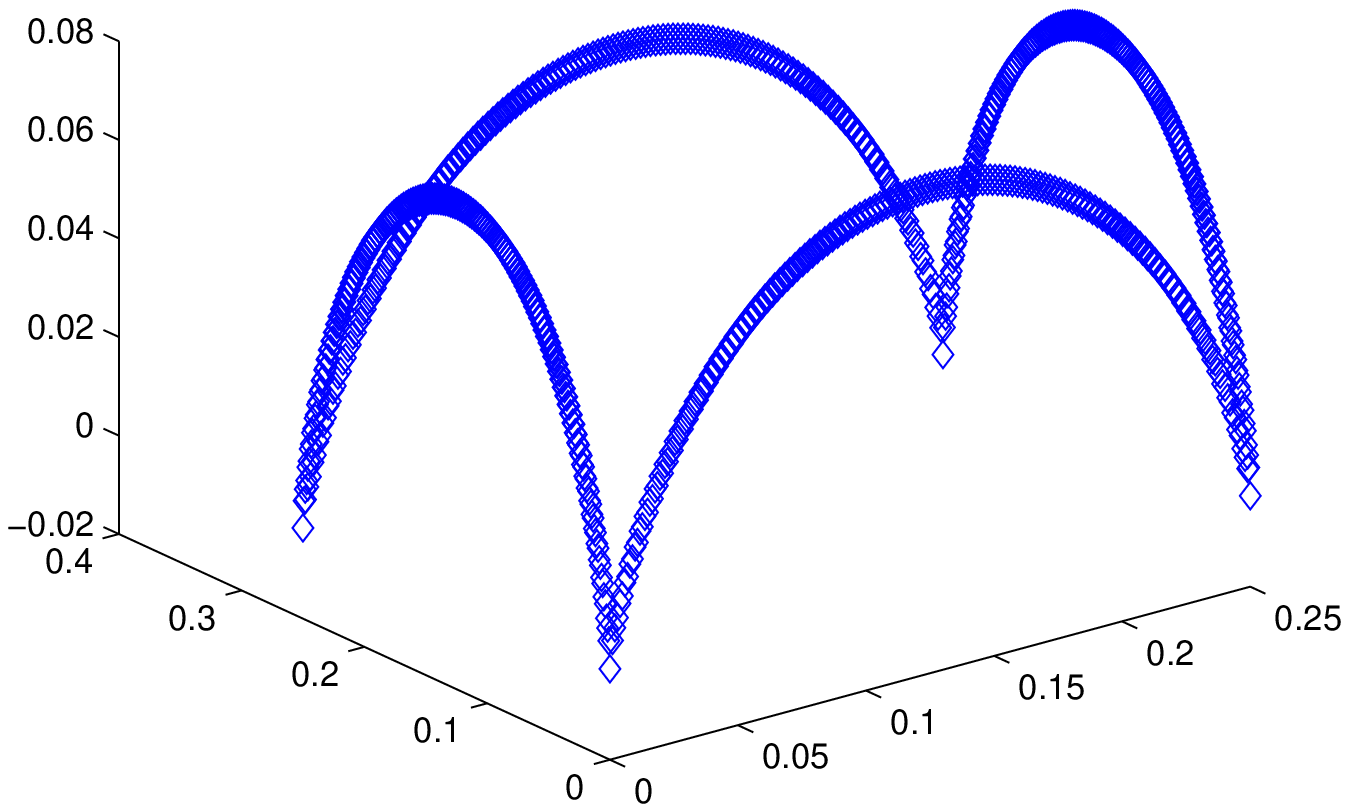}\\
      \end{minipage}
    \begin{minipage}[t]{0.5\linewidth}
        \centering
        \includegraphics[width=2.25in]{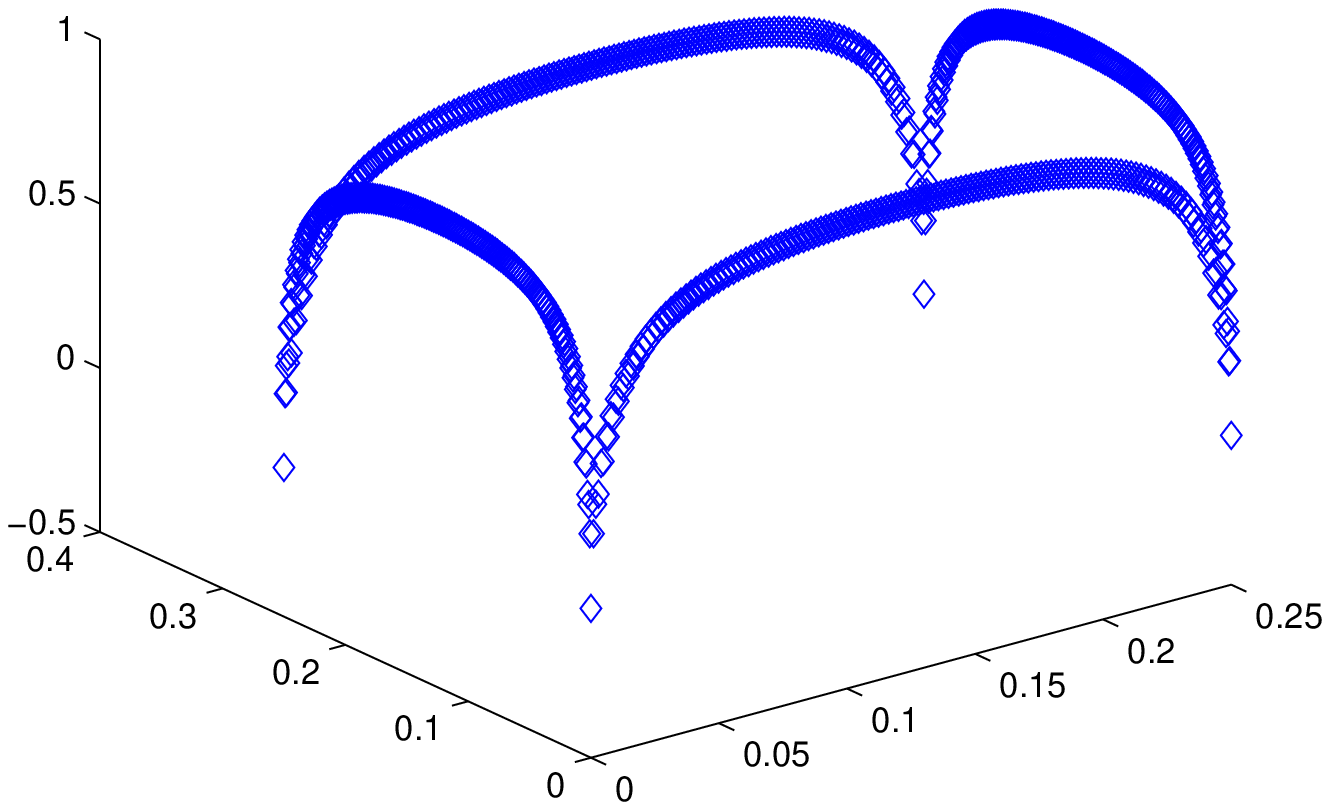}\\
    \end{minipage}
    \caption{An approximation to the
     control variable $u$, i.e., the restriction of $y_{h}$ on the boundary $\Gamma$, over the mesh generated by uniform refinement of iteration 7 (with 131072 elements) for the regularization parameter $\gamma=1$ (left) and $\gamma=0.01$ (right).}
    \label{du51}
\end{figure}

We still start with an initial mesh consisting of 8 congruent right triangles. Figures \ref{du49} and \ref{du50} show an approximation solution to the state $y$ and adjoint state $z$ over the mesh generated by uniform refinement of iteration 6 (with 32768 element) for different regularization parameter $\gamma=1$ (left) and $\gamma=0.01$ (right). Figure \ref{du51} reports the restriction of an approximation of the state on the boundary, i.e., an approximation solution of the control $u$, over the mesh generated by uniform refinement of iteration 7 (with 131072 element) for different regularization parameter $\gamma=1$ (left) and $\gamma=0.01$ (right).

From Figures \ref{du49} and \ref{du51}, we observe that the control changes quickly at the four corners of the boundary $\Gamma$. Furthermore, we remark that the control for the regularization parameter $\gamma=0.01$ changes more sharply at the four corners of the boundary than for $\gamma=1$, and that the singularity of the exact solution for $\gamma=0.01$ is stronger than for $\gamma=1$.

\begin{table}[t]\small
 \begin{center}
        \caption{Numerical data of $\gamma=1$ for Example 2: $h$ -- maximum size of quasi-uniform mesh; $||y-y_{h}||$ -- numerical error for the state variable $y$ in $L^{2}$ norm; order$_{y}$ -- the speed of convergence for $y$ ; $||z-z_{h}||$ -- numerical error for the adjoint state variable $z$ in $L^{2}$ norm; order$_{z}$ -- the speed of convergence for $z$; $||u-u_{h}||_{0,\Gamma}$ -- numerical error for the control variable $u$; $||\nabla(y-y_{h})||$ -- numerical error for the state variable $y$ in $H^{1}$ seminorm.} \label{du52}
        \small 
        \begin{tabular}{|c|c|c|c|c|c|c|} \hline
            $h$& $||y-y_{h}||$& ${\rm order}_{y}$& $||z-z_{h}||$& ${\rm order}_{z}$& $||u-u_{h}||_{0,\Gamma}$& $||\nabla(y-y_{h}||)$\\ \hline
            0.1768&0.0117&--&7.28e-005&--&0.3212&0.4462 \\ \hline
            0.0884&0.0034&1.7833&2.51e-005&1.5398&0.2999&0.3283\\ \hline
            0.0442&0.0011&1.6280&7.95e-006&1.6554&0.2896&0.3167\\ \hline
            0.0221&3.61e-004&1.6078&2.10e-006&1.9218&0.2794&0.3048\\ \hline
            0.0111&1.18e-004&1.6185&5.29e-007&1.9875&0.2693&0.2977\\ \hline
        \end{tabular}
    \end{center}
\end{table}

From Table \ref{du52}, we observe that the numerical error of the state $y$ in $L^{2}$ norm has the speed of convergence of order 1.6, and
that the rate of convergence of the numerical error for the adjoint state $z$ is still close to order 2. However, the numerical errors
$||u-u||_{0,\Gamma}$ and $||\nabla(y-y_{h})||$ have a very slow speed of convergence, this is due to the very low regularity of the
exact solutions. In fact, the exact control $u$ has strong singularity at four corners of the boundary. This indicates that adaptive mesh
 based on {a posteriori} error estimator is efficient to this type of problems, we refer to the articles \cite{cai1,Boundary,Ani,Babuska1978,NUMER,Verfurth1996,Liu2008,Li2002,Kohls2012,Kohls2014,Schneider2016,Becker2000,Hint2008} about adaptive finite element methods on the base of {a posteriori} error estimates.

\end{document}